\theoremstyle{plain}
\newtheorem{theorem}{Theorem}
\newtheorem{corollary}[theorem]{Corollary}
\newtheorem{lemma}[theorem]{Lemma}
\newtheorem{proposition}[theorem]{Proposition}
\theoremstyle{definition}
\newtheorem{remark}[theorem]{Remark}
\newtheorem{question}[theorem]{Question}
\numberwithin{equation}{section}
\numberwithin{theorem}{section}
\theoremstyle{definition}
\newenvironment{definition}
{\pushQED{\qed}\definitionx}
{\popQED\enddefinitionx}
\newenvironment{example}
{\pushQED{\qed}\examplex}
{\popQED\endexamplex}
\newcommand{\B}[1]{\mathbb #1}
\renewcommand{\P}{\mathbb{P}}
\DeclareMathOperator{\rank}{rank}
\DeclareMathOperator{\gcc}{gcc}
\DeclareMathOperator{\tc}{tc}
\newcommand{\comp}{\mathsf{c}}
\definecolor{DarkGreen}{rgb}{0,0.65,0}
\definecolor{NiceBlue}{rgb}{0.2,0.2,0.75}
\newcommand{\struc}[1]{{\color{NiceBlue} #1}}
\title{Multiple typical ranks in matrix completion}
\author{Mareike Dressler}
\address{Mareike Dressler, School of Mathematics and Statistics, University of New South Wales, Sydney, NSW 2052, Australia. Previously:	Department of Mathematics, 
	University of California, San Diego, 
	La Jolla, CA 92093, 
	USA.\medskip}
\email{m.dressler@unsw.edu.au}
\author{Robert Krone}
\address{Robert Krone, Department of Mathematics,
	University of California, Davis,       
	One Shields Avenue                        
	Davis, CA 95616,                          
	USA\medskip}
\email{rkrone@math.ucdavis.edu}
\subjclass[2010]{Primary: 05C50, 14P05, 15A83; Secondary: 12D99}
\keywords{Matrix completion, typical rank, generic rank, circulant graph}
\begin{document}
	
\begin{abstract}
	Low-rank matrix completion addresses the problem of completing a matrix from a certain set of generic specified entries. Over the complex numbers a matrix with a given entry pattern can be uniquely completed to a specific rank, called the generic completion rank.  Completions over the reals may generically have multiple completion ranks, called typical ranks. We demonstrate techniques for proving that many sets of specified entries have only one typical rank, and show other families with two typical ranks, specifically focusing on entry sets represented by circulant graphs. This generalizes the results of Bernstein, Blekherman, and Sinn. In particular, we provide a complete characterization of the set of unspecified entries of an $n\times n$ matrix such that $n-1$ is a typical rank and fully determine the typical ranks of an $n\times n$ matrix with unspecified diagonal for $n<9$. Moreover, we study the asymptotic behavior of typical ranks and present results regarding unique matrix completions.
\end{abstract}

\maketitle

\section{Introduction}

The problem of recovering a low-rank matrix from a subset of the entries with specified values, also known as \struc{\emph{low-rank matrix completion}}, is ubiquitous in diverse areas and many applications like collaborative filtering (e.g., the famous ``Netflix problem'') \cite{Goldberg1992UsingCF},  computer vision \cite{Chen2004RecoveringTM,Tomasi2004ShapeAM}, control \cite{Mesbahi:Papa}, machine learning \cite{Amit:Fink:Serebro:Ullman}, and signal processing \cite{Weng:Wang}.

We can view the low-rank matrix completion problem for a matrix $A$ as the optimization problem of finding the lowest rank matrix $X$ which coincides with $A$ in all specified (known, sampled) entries. This is a non-convex optimization problem and well-known to be NP-hard in general. However, there exist several heuristic methods that solve this problem approximately or exactly by considering convex relaxations of the original problem. One of the most popular approaches in this manner is proposed by Fazel in \cite{Fazel:Thesis} who suggests approximating the rank condition by using the nuclear norm (sum of its singular values) and to formulate the resulting problem as a semidefinite program that can be solved efficiently. Even more is true: under a certain genericity assumption on an $n\times n$ matrix $A$ of rank $r$ then, with high probability, one can use the aforementioned approach to uniquely recover the matrix $A$ from only $\Theta(n^{1.25}r \log n)$ entries chosen uniformly at random \cite{Candes:Recht, Candes:Tao}. In many applications, the matrices under consideration meet these assumptions, but in case they are not valid one has to rely on different approaches. Moreover, solving the relaxed optimization problem is computationally very expensive if the problem size becomes large. 

Building on ideas from rigidity theory Kir\'{a}ly, Theran, and Tomioka \cite{Kiraly:Theran:Tomioka} developed in 2015 a new approach using tools from algebraic geometry, graph theory, and matroid theory. This novel direction entailed several new works in recent years \cite{BBS,Bernstein:Blekherman:Lee,Tsakiris}. 

Inspired by the results in \cite{BBS}, we study here the \emph{exact} low-rank matrix completion problem. In more detail, we are concerned with the following task: Given a \emph{partially filled} matrix $A$ with \emph{fixed} positions of specified and unspecified entries, we want to find the unspecified entries of $A$ such that the completed matrix has the lowest possible rank, called the \textit{completion rank} of $A$. We typically focus on \emph{generic} input data for $A$, that is, small perturbations of the entries of $A$ do not change its completion rank. It is shown in \cite{Kiraly:Theran:Tomioka} that, outside of a lower dimensional set, all partially filled matrices can be completed to the same minimal rank over the complex numbers. This rank is called the \struc{\emph{generic completion rank}} of the given pattern. However, when restricting the entries of the matrix and its completion to real numbers, this no longer holds true. Depending on the choice of the specified entries, the partially filled matrix $A$ may be completable to different ranks. To address this phenomenon, these ranks are referred to as \struc{\emph{typical ranks}} when the set of matrices minimally completable to these ranks forms a full-dimensional subset of the vector space of the specified entry values (i.e., the space of partial fillings). This is in analogy to the study of generic and typical ranks of tensors \cite{Blekherman:Teitler, Kahle:Kubjas:Kummer:Rosen, Bernardi:Blekherman:Ottaviani:RealTypicalRanks, Landsberg}.

\subsection*{Contributions}
In this article we are particularly interested in studying typical ranks of a given pattern of specified and unspecified entries. Our leading goal is to gain a deeper understanding of the sets of specified and unspecified entries respectively exhibiting multiple typical ranks. 

In this light, our first main contribution is a complete characterization of the set of unspecified entries of an $n\times n$ matrix such that $n-1$ is a typical rank, see \Cref{thm:Characterization1isTypicalCR}. 

The given entry pattern of an $n\times m$ matrix translates nicely into the notion of a bipartite graph with parts of size $n$ and $m$, see \Cref{ex:graphANDmatrix}.
In order to approach our goal we lay our focus on the study of matrices with unspecified set given by certain bipartite circulant graphs, denoted \struc{$G(n,k)$}, see \Cref{def:Gnl}.  These graphs have shown to be particularly challenging to complete over the real numbers to particular ranks.  The smallest graph with multiple typical ranks is $G(4,1)$, and in \cite{BBS} it was posited that other graphs in the family might be examples with larger numbers of typical ranks. Our second major contribution is the full determination of the typical ranks for unspecified entry set $G(n,1)$ for $n<9$, see \Cref{4diag}, \Cref{5diag}, and \Cref{5to8diag}. In the course of these results we successfully answer several questions raised in \cite{BBS}. Moreover, we present results that bound the growth of the typical ranks of $G(n,1)$ (\Cref{1circulant}) and of $G(n,k)$ (\Cref{kcirculant}) as $n$ grows. Further outcomes on deciding the typical ranks of explicit graphs are given in Propositions \ref{G64e} and \ref{G'52}. In sum these results shed light onto the situation of multiple typical ranks and our search for examples of when they occur. However, we are still far away from completely grasping this phenomenon. One major question that remains is whether a set of entries with three or more typical ranks is possible. %\mareike{I am not sure yet how to cleverly formulate this, or if even to include it.}

Our remaining results concern generically unique completability of a matrix, see, e.g., Lemma~\ref{grow}. 

Furthermore we pose several open problems and questions for the important future study of typical ranks.

\bigskip
The paper is structured as follows. In Section \ref{Sec:Prelim}, we provide the necessary background in algebraic geometry and for matrix completions. Section~\ref{Sec:MainTheorem} studies generically unique completability of a matrix, presents our results on the typical ranks corresponding to the unspecified set of entries being $G(n,n-1)$, and states our main theorem. In Section~\ref{Sec:more circulant graph results}, we study the generically real completion of $G(n,n-1)$. We identify a certain subset of $G(n,n-k)$ that yields to further outcomes for bipartite circulant graphs. We conclude with a brief discussion in Section~\ref{Sec:Discussion}.

\section{Preliminaries}
\label{Sec:Prelim}

Let $\struc{K}$ be a field, and $\struc{T}$ a finite indexing set.  Fix a $|T|$-dimensional affine space with coordinates $\{\struc{x_i}\}_{i \in T}$, denoted $\struc{K^T}$.  Let $\struc{V}$ be an irreducible affine algebraic variety in $K^T$.  For $\struc{S} \subseteq T$, the map $\struc{\pi_S}\colon K^T \to K^S$ will denote the projection map onto the coordinates indexed by $S$.  Throughout we will use the notation $\struc{[n]} \coloneqq \{1,\ldots,n\}$.

\begin{definition}
 For $p \in K^S$, a {\struc{\em completion}} of $p$ to $V$ is an element of $\pi_S^{-1}(p) \cap V$.  The set $S$ is {\struc{\em completable}} to $V$ if every point $p \in K^S$ has a completion, and {\struc{\em generically completable}} to $V$ if a generic point $p \in K^S$ has a completion.  $S$ is {\struc{\em generically uniquely completable}} to $V$ if a generic point $p \in K^S$ has a unique completion, i.e., a generic fiber of $\pi_S|_V$ consists of one reduced point.
\end{definition}

The ring of polynomials with coefficients in $K$ and indeterminants indexed by $T$ will be denoted $\struc{K[T]}$. Let $\struc{\B I(V)} \subseteq K[T]$ be the ideal of polynomials that vanish on $V$.  Let $\struc{\sigma}\colon K[T] \to K[T]/\B I(V) = \struc{K[V]}$ be the quotient map.  Then $S$ is generically completable to $V$ if and only if the set $\{\sigma(x_i) \mid i \in S\}$ is algebraically independent in $K[V]$.  By a dimension count it follows that if $S$ is generically completable to $V$, then $|S| \leq \dim V$.  The set of completions for a generic $p \in K^S$ has dimension $\dim V - |S|$.

If $S$ is generically uniquely completable to $V$ then $\{\sigma(x_i) \mid i \in S\}$ is a basis for the \emph{algebraic matroid} of $K[V]$.  The converse is not true since $\sigma(S)$ being a basis for $K[V]$ only implies that $\pi_S|_V$ is generically finite-to-one. %\mareike{is `finite-to-one' a fixed expression?} \robert{I think so, like `one-to-one'.}  
 
\begin{proposition}\label{ratmap}
  If $S$ is generically uniquely completable to $V$ then each coordinate of the completion of $p \in K^S$ to $V$ can be expressed as a rational function of the coordinates of $p$.
\end{proposition}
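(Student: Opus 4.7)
The plan is to reduce the statement to the assertion that $\pi_S|_V$ is birational onto $K^S$ and then exploit the induced isomorphism of function fields. First, I would note from the dimension count preceding the proposition that a generic fiber of $\pi_S|_V$ has dimension $\dim V - |S|$; generic unique completability forces this to be $0$, so $|S| = \dim V$. Hence $\pi_S|_V : V \to K^S$ is a dominant morphism between irreducible varieties of equal dimension, in particular generically finite and with dense image.

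Next, I would upgrade ``generically finite with a one-point reduced generic fiber'' to ``birational.'' Since a generic fiber of $\pi_S|_V$ consists of a single reduced point, the degree of the map equals one: this is the classical fact that, for a dominant generically finite morphism of varieties, the degree of the induced function-field extension agrees with the cardinality of a generic reduced fiber. Consequently the pullback
\[
\pi_S^{\ast} : K(x_j : j \in S) \longrightarrow K(V), \qquad x_j \mapsto \sigma(x_j),
\]
is an isomorphism of function fields.

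Finally, for each $i \in T$ the element $\sigma(x_i) \in K(V)$ lies in the image of $\pi_S^{\ast}$, so there exist polynomials $f_i, g_i \in K[x_j : j \in S]$ with $g_i \not\equiv 0$ on $V$ satisfying $\sigma(x_i) = f_i(\sigma(x_j) : j \in S)/g_i(\sigma(x_j) : j \in S)$ in $K(V)$. For a generic $p \in K^S$ (in particular with $g_i(p) \neq 0$) the unique completion $q \in \pi_S^{-1}(p) \cap V$ then satisfies $q_i = f_i(p)/g_i(p)$, which is the desired rational expression in the coordinates of $p$. The only delicate point in the plan is the birationality step, i.e.\ passing from a single reduced generic fiber point to an isomorphism on function fields; this is standard, and the \emph{reducedness} of the fiber is precisely what rules out any inseparability or multiplicity complication that would otherwise inflate the field-extension degree.
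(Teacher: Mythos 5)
Your proposal is correct and follows essentially the same route as the paper: both arguments establish that $\pi_S|_V$ is a birational equivalence onto $K^S$ and then read off each coordinate of the completion as a rational function via the inverse rational map. Your version simply makes explicit the degree-one function-field argument (and the role of reducedness of the generic fiber) that the paper's one-line proof leaves implicit.
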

\begin{proof}
  If $S$ is generically uniquely completable to $V$ then the projection map $\pi_S|_V\colon V \to K^S$ is injective, and its image is Zariski open in $K^S$.  Therefore $\pi_S|_V$ is a birational equivalence, so there is a rational map $K^S \to V$.
\end{proof}

 We are particularly interested in the case that $K^T = \struc{\B C^{[n]\times [m]}}$, the space of $n\times m$ matrices with complex entries, and $V$ is the set of $n\times m$ matrices with rank at most $\struc{r}$ for some chosen nonnegative integer $r$.  When $S \subseteq [n] \times [m]$ is completable to $V$, we say $S$ is completable to rank $r$. We call a point $A \in \B C^S$ a \struc{\emph{partially filled}} matrix, or a \struc{\emph{partial filling}} of $S$.  The set $S$ is referred to as the \struc{{\em specified entries}} of the matrix because a choice of point $A \in \B C^S$ specifies the values of these entries.  The remaining entries $\struc{U} = T \setminus S$ are the \struc{{\em unspecified entries}}.

 \begin{definition}
  The \struc{{\em generic completion rank}} of $S \subseteq [n]\times [m]$ is the lowest rank to which $S$ is generically completable.  Similarly the \struc{{\em generic completion corank}} of $S$, is $\min\{n,m\}$ minus the generic completion rank.
 \end{definition}
 
 It is known that in general the lowest rank to which $S$ is generically completable over the real numbers may be higher than the generic completion rank.  In other words, there may be generic points in $\B R^S$ for which the only completions to the generic completion rank are not real.  In fact, it may be that there are multiple open subsets of $\B R^S$ with different minimum ranks to which $S$ can be completed.  
 
 \begin{definition}
  The \struc{{\em typical ranks}} of $S \subseteq [n]\times [m]$ are all integers $r$ for which there is an Euclidean open subset of $\B R^S$ such that $r$ is the lowest rank to which $S$ can be completed over $\B R$. In the same way we define the \struc{{\em typical coranks}} of $S$ by $\min\{n,m\}$ minus the typical ranks. 
 \end{definition}
 We point out that Zariski dense subsets give an equivalent definition as the sets involved are full-dimensional semialgebraic subsets in $\B R^S$.
 Note that both the generic completion rank and the typical ranks depend not just on a set $S$ of pairs of integers, but also on the matrix size $T = [n] \times [m]$.
 
 The following result was observed in \cite[Theorem~1.1]{Bernardi:Blekherman:Ottaviani:RealTypicalRanks} in context of ranks in projective geometry and in \cite[Proposition 2.8]{BBS} from the matrix completion point of view.
 \begin{theorem}\label{thm:trHierarchy}
 The typical ranks of $S \subseteq [n]\times [m]$ are consecutive integers, and the smallest is equal to the generic completion rank of $S$.
 \end{theorem}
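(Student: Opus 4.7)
The statement splits into two claims. Part (a): the smallest typical rank equals the generic completion rank $r_0$. Part (b): the typical ranks form consecutive integers. I address each below.

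For (a), in one direction no $r < r_0$ can be typical: by the definition of $r_0$ the image $\pi_S(V_r(\mathbb{C}))$ has complex dimension strictly less than $|S|$, and since the real points of a constructible subset of $\mathbb{C}^S$ have real dimension bounded by its complex dimension, the set of real partial fillings completable over $\mathbb{R}$ to rank at most $r$ has real dimension less than $|S|$, hence is not Zariski dense in $\mathbb{R}^S$. In the reverse direction, I would parametrize real rank-at-most-$r_0$ matrices via $\phi(U,V) = UV^{\tran}$ from $\mathbb{R}^{n \times r_0} \times \mathbb{R}^{m \times r_0}$ and consider the polynomial map $\Psi = \pi_S \circ \phi$. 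Its complexification is dominant by the definition of $r_0$, so the Jacobian of $\Psi$ achieves rank $|S|$ on a Zariski-open subset of the parameter space, which must contain a real point. At such a point, the implicit function theorem produces a Euclidean open subset of $\mathbb{R}^S$ inside the image of $\Psi$. Since a generic real partial filling has min complex completion rank exactly $r_0$ (so min real rank $\geq r_0$), this exhibits a Euclidean open set of real partial fillings whose min real rank is exactly $r_0$, confirming that $r_0$ is typical.

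For (b), assume $r_1 < r_3$ are both typical and let $r_1 < r_2 < r_3$; the plan is to show the set $M_{r_2}$ of partial fillings with min real rank $r_2$ has nonempty Euclidean interior. Pick interior points $p_1 \in M_{r_1}$ and $p_3 \in M_{r_3}$ and consider the line segment $\gamma(t) = (1-t) p_1 + t p_3$ in $\mathbb{R}^S$. The function $f(t)$ recording the min real completion rank of $\gamma(t)$ is semialgebraic and integer-valued, hence piecewise constant on finitely many semialgebraic subintervals of $[0,1]$, with $f(0) = r_1$ and $f(1) = r_3$. I would argue that $f$ cannot skip the value $r_2$: at any transition between adjacent cells, pick a real completion $A$ at the transition achieving the local min rank and deform its factorization by adding or removing a controlled rank-one summand $uv^{\tran}$, changing the rank by exactly one. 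The projection of this matrix deformation sweeps out a positive-dimensional family of nearby partial fillings of intermediate min real rank, and a single realization with $f = r_2$ is then fattened to a Euclidean open subset of $M_{r_2}$ by the same implicit-function argument used in (a), now applied to the parametrization of rank-at-most-$r_2$ matrices.

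The main obstacle is that $f$ is neither upper nor lower semicontinuous on $\mathbb{R}^S$, so a naive continuity argument along $\gamma$ does not rule out jumps that skip over $r_2$: real completions of boundary partial fillings can escape to infinity and change the effective rank in the limit. The resolution is to perform the analysis on a projective compactification of $V_{r_3}(\mathbb{R})$, or equivalently to exploit the smoothness of the real stratum of rank-exactly-$k$ real matrices (a manifold of real dimension $k(n+m-k)$), which supports the transversality and implicit-function arguments needed at the boundary fillings where adjacent cells of $f$ meet.
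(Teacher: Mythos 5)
First, a caveat: the paper does not actually prove this statement; it is imported from \cite{Bernardi:Blekherman:Ottaviani:RealTypicalRanks} (Theorem~1.1) and \cite{BBS} (Proposition~2.8), so your attempt can only be compared with the standard arguments there. Your part~(a) is correct and is essentially that standard argument: the dimension count over $\mathbb{C}$ rules out typical ranks below the generic completion rank $r_0$, and the submersion/implicit-function argument applied to $(U,V)\mapsto \pi_S(UV^{\mathsf{T}})$ produces a Euclidean open set of real fillings completable over $\mathbb{R}$ to rank $r_0$; removing the lower-dimensional locus completable to smaller rank then witnesses typicality of $r_0$.

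Part~(b), the consecutivity claim, has a genuine gap at its central step. Write $C_k\subseteq\mathbb{R}^S$ for the (semialgebraic) set of fillings admitting a real completion of rank at most $k$; then $r_2$ is typical precisely when $C_{r_2}\setminus C_{r_2-1}$ is full-dimensional. Your argument produces at best a single filling of minimum real rank $r_2$ and then asserts it is ``fattened to a Euclidean open subset of $M_{r_2}$ by the same implicit-function argument used in~(a).'' That argument only shows a neighborhood of the point lies in $C_{r_2}$; it gives no control over $C_{r_2-1}$, which is itself full-dimensional (since $r_2-1\ge r_0$) and could be dense in every neighborhood of your point, in which case $M_{r_2}$ has empty interior and $r_2$ is not typical. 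Ruling out exactly this scenario is the content of the theorem, so the proof is circular at the decisive moment. The preceding step is also unsubstantiated: adding a rank-one summand $uv^{\mathsf{T}}$ to a rank-$k$ completion $A$ of $\gamma(t^*)$ yields a rank-$(k+1)$ completion of the \emph{different} filling $\gamma(t^*)+\pi_S(uv^{\mathsf{T}})$, and nothing shows that this perturbed filling fails to be completable to rank $\le k$ (nor, as you yourself note, that a minimizing completion $A$ of the boundary filling $\gamma(t^*)$ exists at all, since completions along the approaching cell may escape to infinity). Your proposed fix --- compactifying $V_{r_3}(\mathbb{R})$ or invoking smoothness of the fixed-rank strata --- is only a gesture: a fiber of $\pi_S$ over a boundary filling may meet the compactification only at infinity, which does not correspond to any actual completion. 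To close the gap you need the boundary analysis carried out in the cited proofs, which is a genuinely separate idea and not a repetition of the argument in~(a).
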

 
 \begin{example}
  Let $S \subseteq [4]\times [4]$ be the set of the 12 off-diagonal entries.  
  The generic completion rank of $S$ is 2 and generally there are 4 completions over the complex numbers.  The typical ranks of $S$ are 2 and 3.  Bernstein, Blekherman, and Sinn \cite[Example~3.1]{BBS} give an example of a partially filled matrix $A \in \B R^S$ for which all 4 completions to rank 2 are not real, and for which this also holds in an open ball around $A$.
 \end{example}

We may interpret the entry set of an $n\times m$ matrix as the complete bipartite graph $\struc{K_{n,m}} = [n] \times [m]$.   A subset of the entries is a bipartite graph $G$ on the disjoint union $[n]\cup [m]$. Hence, we also identify the entry set of a matrix with its corresponding graph. The graph representation is convenient because generic completion and typical ranks are invariant under row and column permutations and transposition of the matrix.  Therefore each case can be represented by an unlabeled bipartite graph.  

In what follows, we consider specific bipartite circulant graphs. Bipartite circulant graphs are bipartite graphs whose biadjacency matrix is a circulant matrix, see \cite{Meyer}. We focus on the following subset of those graphs.

\begin{definition}\label{def:Gnl}
Let $G$ be a bipartite graph on the disjoint union $[n]\cup [n]$, then denote by $\struc{G(n,k)}$ the graph, where the edges of $G$ are $(i,i),(i,i+1),\ldots,(i,i+k-1)$ for each $i\in [n]$, with second index taken modulo $n$.
\end{definition}

Note, that this definition differs from the one given in \cite[Section 5.3]{BBS} for $G(n,k)$, but only by a cyclic column (or row) permutation.  It is the same unlabeled bipartite graph.  Our $G(n,k)$ is the complement of their $G(n,n-k)$.

\begin{example}\label{ex:graphANDmatrix}
For a set of specified entries $G$ it is often easier to describe the set of unspecified entries, which is its \emph{bipartite complement} $\struc{G^\comp} = K_{m,n}\setminus G$.  Note that $G(n,k)^\comp = G(n,n-k)$ up to a permutation of the vertex labels.  Consider the unspecified entry set $G(n,1)$. Its edges are the diagonal entries, $(i,i)$ for $i\in [n]$.  The following are three types of diagrams representing this set:  a bipartite graph; named specified entries and question mark unspecified entries in a matrix; and black and white dots representing specified and unspecified entries respectively.

\vspace{2.5em}
\begin{minipage}{0.22\textwidth}
\centering
\begin{tikzpicture}
%% vertices
\draw[fill=black] (0,0) circle (3pt);
\draw[fill=black] (0,-1) circle (3pt);
\draw[fill=black] (0,-2) circle (3pt);
\draw[dotted,thick] (0,-2.5) -- (0,-3);
\draw[fill=black] (0,-3.5) circle (3pt);
\draw[fill=black] (1.4,0) circle (3pt);
\draw[fill=black] (1.4,-1) circle (3pt);
\draw[fill=black] (1.4,-2) circle (3pt);
\draw[dotted,thick] (1.4,-2.5) -- (1.4,-3);
\draw[fill=black] (1.4,-3.5) circle (3pt);

%%% non-edges
\draw[thick] (0,0) -- (1.4,0);
\draw[thick] (0,-1) -- (1.4,-1);
\draw[thick] (0,-2) -- (1.4,-2);
\draw[thick] (0,-3.5) -- (1.4,-3.5);
\end{tikzpicture}
\end{minipage}
\begin{minipage}{0.48\textwidth}
$\begin{pmatrix}
? & a_{12} & a_{13} & \cdots & a_{1n-1} & a_{1n}\\
a_{21} & ? & a_{23} & \cdots & a_{2n-1}& a_{2n}\\
a_{31} & a_{32} & ? & \cdots & a_{3n-1}& a_{3n}\\
\vdots & & & \ddots  & & \vdots\\
a_{n-11}& & & \cdots  & ? & a_{n-1n}\\
a_{n1} & a_{n2} &a_{n3} &\cdots & a_{nn-1}& ?
\end{pmatrix}$
\end{minipage}
\begin{minipage}{0.22\textwidth}
	\begin{tikzpicture}
	\foreach \x in {0,1,2,4,5,6}
	\foreach \y in {0,1,2,4,5,6}
	\draw[fill=black] (\x*0.5,\y*-0.5) circle (3pt);
	\foreach \x in {0,...,2}
	\draw[fill=white] (\x*0.5,\x*-0.5) circle (3pt);
	\foreach \x in {4,...,6}
	\draw[fill=white] (\x*0.5,\x*-0.5) circle (3pt);
	%%% \cdots
	\foreach \x in {3}
	\foreach \y in {0,1,2,4,5,6}
	\path (\x*0.5,\y*-0.5) -- node[auto=false]{$\ldots$} (\x*0.5,\y*-0.5);
	\foreach \x in {0,1,5,6}
	\foreach \y in {3}
	\path (\x*0.5,\y*-0.46) -- node[auto=false]{$\vdots$} (\x*0.5,\y*-0.46);
	\foreach \x in {3}
	\foreach \y in {3}
	\path (\x*0.5,\y*-0.47) -- node[auto=false]{$\ddots$} (\x*0.5,\y*-0.47);
	%%% squares
	\draw (-.25,.25) rectangle (3.25,-3.25);
	\end{tikzpicture}
\end{minipage}
\vspace{2em}

The above bipartite graph represents any partially filled square matrix with one unspecified entry in each row and different columns, hence, up to interchanging rows, it is a matrix with all entries filled except for its diagonal ones. In what follows, we represent the entries of matrices by black and white dots corresponding to specified and unspecified ones, respectively.  In future diagrams we sometimes omit the black dots, hence the specified entries.

 \Cref{figG73} gives another example with $n = 7$ and $k = 3$.
\end{example}

\section{Unique typical ranks by unique completability}
 \label{Sec:MainTheorem}

In this section we study when a set of unspecified entries admits a generically unique completion to certain minor conditions.  This tool is used to bound the number of typical ranks, particularly in the case of a unique typical rank.  We answer questions about $G(n,1)^\comp$ for both specific and general $n$, leading to a characterization of partially filled square matrices with typical rank $n-1$.
\smallskip

First we discuss the case when fixing $S$, thus its partial filling $A$, and adding a specific number of fully specified rows and columns to $A$. 
\begin{lemma}\label{grow}
 Let $U \subseteq [n]\times [m]$ and define $S = ([n]\times[m]) \setminus U$ and $S' = ([n+k]\times[m+k]) \setminus U$.  Then the generic completion corank and typical coranks of $S$ and $S'$ are equal.  Additionally, $S$ is generically uniquely completable to corank $r$ if and only if $S'$ is.
\end{lemma}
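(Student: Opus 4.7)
The plan is to relate completions of the $(n+k)\times(m+k)$ partial matrix to completions of the $n\times m$ partial matrix via a Schur complement, exploiting that all of the newly added rows and columns are fully specified.

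Given a partial filling $A' \in \mathbb{R}^{S'}$, I would write it in block form
$$A' = \begin{pmatrix} A & B \\ C & D \end{pmatrix},$$
where $A$ is the $n\times m$ block containing all the unspecified entries $U$, and $B$, $C$, $D$ are the fully specified blocks of sizes $n\times k$, $k\times m$, and $k\times k$ respectively. For a generic choice of $A'$ the block $D$ is invertible, so the Schur complement identity yields
$$\rank(A') = \rank(D) + \rank(A - BD^{-1}C) = k + \rank(A - BD^{-1}C).$$
The key observation is that $A - BD^{-1}C$ has unspecified entries at exactly the positions $U$, so completing the unspecified entries of $A'$ to minimize $\rank(A')$ is equivalent to completing the unspecified entries of $A - BD^{-1}C$ to minimize its rank.

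Next I would verify that ``generic'' is preserved by this correspondence. For fixed generic $B$, $C$, $D$, the map from the specified entries of $A$ to the specified entries of $A - BD^{-1}C$ is an affine translation on $\mathbb{R}^S$, and thus preserves Zariski density. Hence generic partial fillings $A' \in \mathbb{R}^{S'}$ (on the open locus where $D$ is invertible) correspond, after a translation in the $S$-coordinates, bijectively to generic partial fillings in $\mathbb{R}^S$, in both directions: any $p_S \in \mathbb{R}^S$ extends to a $p_{S'}$ by choosing $B$, $C$, $D$ generically, and any generic $p_{S'}$ projects to the corresponding translated point of $\mathbb{R}^S$.

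Combining the two observations, the minimum rank of a completion of $A'$ is exactly $k$ plus the minimum rank of a completion of the associated $n\times m$ problem. Since $\min(n+k, m+k) = \min(n,m) + k$, subtracting yields equality of generic completion coranks and typical coranks of $S$ and $S'$. For the uniqueness statement, the Schur identity in fact produces a bijection between completions of $A$ to rank $\min(n,m) - r$ and completions of $A'$ to rank $\min(n,m) - r + k$, so generic unique completability to corank $r$ is equivalent for $S$ and $S'$. The only step requiring care is checking that genericity passes cleanly through the Schur complement correspondence, and this is automatic on the Zariski open locus where $D$ is invertible; I do not expect any substantive obstacle.
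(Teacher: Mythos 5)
Your proposal is correct and follows essentially the same route as the paper's proof: the same block decomposition, the same Schur complement identity $\rank(A') = \rank(D) + \rank(A - BD^{-1}C)$, and the same observation that the Schur complement is again a generic partial filling of $S$. Your explicit check that genericity is preserved (the map on the specified entries being a translation by the fully specified quantity $BD^{-1}C$) just spells out a step the paper states more tersely.
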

\begin{proof}
 Let $A$ and $A'$ be generic partial fillings of $S$ and $S'$ respectively that agree on $S$ (either over the complex or real numbers).  Then $A'$ can be assigned the following block structure
  \[ A' = \begin{bmatrix} A & B \\ C & D \end{bmatrix}. \]
 Using the Schur complement, $A'/D = A - BD^{-1}C$, we know
 \[ \rank(A') = \rank(D) + \rank(A'/D). \]
 $D$ is fully specified and generic, thus it has rank $k$.  Since $BD^{-1}C$ is totally specified and generic, $A'/D$ is also a generic partial filling of $S$.  Therefore the minimum ranks of completions of $A$ and $A'/D$ are equal.  The completion of $A$ to corank $r$ is unique if and only if the completion of $A'/D$ is unique.
\end{proof}
 
\Cref{grow} tells us that for a fixed set of unspecified entries, $U$, the generic completion corank and the set of typical coranks of $([n]\times [n+k]) \setminus U$ do not depend on $n$.  In particular for square matrices, these coranks depend only on $U$.
\begin{definition}
 For a finite set of unspecified entries $U \subseteq \B Z_{\geq 0} \times \B Z_{\geq 0}$, we will denote the generic completion corank and set of typical coranks of $([n]\times [n]) \setminus U$ by $\struc{\gcc(U^\comp)}$ and $\struc{\tc(U^\comp)}$ respectively. Clearly, $U^\comp$  coincides with the set of specified entries. 
\end{definition}
 
A dimension count shows that
 \[ \gcc(U^\comp) \leq \sqrt{|U|}. \]
By \Cref{thm:trHierarchy} it follows, that the typical coranks are also at most $\sqrt{|U|}$.
Similarly for a non-square matrix of size $(n+k) \times n$, the generic completion corank or a typical corank $r$ of $U^\comp$ satisfies the bound
 \[ r(r+k) \leq |U|. \]

\begin{example}
A simple case where equality is met for $(n+k)\times n$ matrices is when $U$ is a complete bipartite graph, $K_{r+k,r} = [r+k] \times [r]$.  Here $U^\comp$ is generically uniquely completable to rank $r$.  However, the $r$ unspecified entries need not be in the same columns for each of the $r+k$ rows.  The left side of \Cref{fig62} shows the case where $U = K_{4,4}$ in a $6\times 6$ matrix.   
\end{example}
The previous example can be seen as a consequence of the following.
\begin{proposition}[Corollary 4.2 of \cite{BBS}]\label{kcore}
 If $A$ is a generic complex or real matrix of rank $r$, then the partially filled matrix $B$ obtained by adding a partially filled column (or row) to $A$ with at most $r$ generically filled entries has a complex or real (resp.) completion to generic rank $r$.  If the number of filled entries is exactly $r$, then the completion is unique.
\end{proposition}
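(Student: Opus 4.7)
The plan is to argue directly via linear algebra on the column space of $A$. Let $A$ be a generic $n \times m$ matrix of rank $r$ over $K = \B C$ or $\B R$, and let $V = \operatorname{col}(A) \subseteq K^n$, an $r$-dimensional subspace. Fix a basis $v_1,\dots,v_r$ of $V$ (e.g., $r$ columns of $A$, which are generically independent). Completing the appended column $c$ so that $B = [A \mid c]$ still has rank $r$ is equivalent to requiring $c \in V$, i.e., $c = \lambda_1 v_1 + \cdots + \lambda_r v_r$ for some $\lambda \in K^r$.

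Let $I \subseteq [n]$ index the specified entries of $c$, with $k = |I| \leq r$, and denote by $c_I$ the vector of specified values and by $M$ the $k \times r$ matrix whose columns are the restrictions $(v_i)_I$. Then the completion condition reduces to the linear system
\[ M \lambda = c_I. \]
First I would observe that for generic $A$ the matrix $M$ has full row rank $k$: indeed, the rows of $V$ indexed by $I$ are $k$ vectors in the $r$-dimensional dual of $V$, and for generic $A$ (in either $\B C^{n\times m}$ or $\B R^{n\times m}$ of rank $r$) no $k$ such rows are linearly dependent. Hence the image of $M$ equals $K^k$, so for any prescribed $c_I$ the system is solvable over $K$. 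Once $\lambda$ is chosen, setting $c := \sum_i \lambda_i v_i$ produces a completion of $B$ to rank exactly $r$ (at most $r$ by construction, and at least $r$ since $\operatorname{rank}(B) \geq \operatorname{rank}(A) = r$).

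For the uniqueness claim, when $k = r$ the matrix $M$ is $r \times r$ and generically invertible, so $\lambda = M^{-1} c_I$ is uniquely determined, and hence so are the unspecified entries of $c$ (they are $(v_i)_{[n]\setminus I}$-linear combinations with these coefficients). The row case is identical by transposing. No step here is a real obstacle: the single thing to verify carefully is the genericity statement that the restriction-to-$I$ map $V \to K^I$ is surjective for generic rank-$r$ $A$, which follows because this surjectivity is a Zariski-open condition on the parameter space of rank-$r$ matrices and is nonempty (take $A$ whose first $r$ rows form an identity block, and suitable choice of $I$, then perturb). The same argument works simultaneously over $\B R$ and $\B C$ since only full rank of a real/complex matrix is used, not positivity or any field-dependent feature.
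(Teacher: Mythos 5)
Your proof is correct and takes essentially the same route the paper relies on: the paper quotes this statement from [BBS, Corollary~4.2] and only remarks that uniqueness ``follows easily by basic arguments from linear algebra,'' which is exactly the linear system $M\lambda = c_I$ you set up and solve (the same idea appears, phrased with row spans, in the paper's argument for \Cref{codimc}). Your write-up just supplies the details, in particular the genericity check that the restriction map $V \to K^I$ is surjective, which is the one point worth making explicit.
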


In Corollary~4.2 \cite{BBS} the authors do not address the uniqueness of the completion. However, this fact follows easily by basic arguments from linear algebra, see also the example below. 

\begin{example}\label{codimc}
 For $(n+k) \times n$ matrices with $r \leq n$ let the unspecified set $U$ be a bipartite graph with $r+k$ vertices of degree $r$ in the first bipartition, and the others of degree $0$.  Then $U^\comp$ is generically uniquely completable to corank $r$.
 \end{example}
For convenience of the reader we include a short proof of Example~\ref{codimc}.
 \begin{proof}
 For $S = [n+k]\times[n] \setminus U$, let $A \in \B C^S$ be a generic partially filled matrix.  There are $n-r$ rows of $A$ with all entries specified.  These rows span a generic dimension $n-r$ subspace $\Lambda \subseteq \B C^n$. The remaining $k+r$ rows have $r$ unspecified entries arranged among $r$ distinct columns. That is, each other row has $n-r$ specified entries.  By genericity, using linear algebra there is a unique completion of the row to $\Lambda$.
 \end{proof}

Now we state another, less obvious, example for square matrices whose entry set is generically uniquely completable.
 
 \begin{corollary}\label{diagstrip}
 For $k \leq n$, the set
  \[ \{(i,j) \in [n]\times [n] \mid n-k+1 < i + j \leq n+k+1 \} \]
 of specified entries is generically uniquely completable to rank $k$. 
\end{corollary}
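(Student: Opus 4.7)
The plan is to apply \Cref{kcore} iteratively, building the full $n\times n$ matrix out of a fully specified $k\times k$ seed by appending one row or column at a time, each contributing exactly $k$ new specified entries. For the seed I take the submatrix $M_0$ on rows $\{n-k+1,\ldots,n\}$ and columns $\{1,\ldots,k\}$: every entry $(i,j)$ in this block satisfies $i+j\in[n-k+2,n+k]\subseteq[n-k+2,n+k+1]$, so it lies in $S$. Hence $M_0$ is fully (and generically) specified, so it has rank $k$.

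I then enlarge the submatrix by alternately appending a column on the right (going through columns $k+1,k+2,\ldots,n$) and a row on top (going through rows $n-k,n-k-1,\ldots,1$). Writing $R_s=[n-k-s+1,n]$ and $C_s=[1,k+s]$ for the row and column sets after $2s$ steps, the key combinatorial check is that in step $2s+1$ the new column $k+s+1$ has exactly $k$ specified entries among the rows $R_s$, and in step $2s+2$ the new row $n-k-s$ has exactly $k$ specified entries among the columns $C_{s+1}$. Both follow from a direct interval intersection: the specified rows of column $k+s+1$ are $[n-2k-s+1,n-s]$, whose intersection with $R_s$ is $[n-k-s+1,n-s]$ (size $k$); the specified columns of row $n-k-s$ are $[s+2,2k+s+1]$, whose intersection with $C_{s+1}$ is $[s+2,k+s+1]$ (size $k$). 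So \Cref{kcore} yields a unique rank-$k$ completion at each of the $2(n-k)$ steps, and after them the submatrix is the whole $[n]\times[n]$.

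The main subtlety — and I expect the only real obstacle to a fully rigorous write-up — is that \Cref{kcore} requires the ambient submatrix at each step to be a generic rank-$k$ matrix and the $k$ new specified entries to be generic relative to it. The latter is automatic, since the $k$ newly specified entries are coordinates of the generic partial filling $A\in\B R^S$ not yet used in earlier steps, hence independent of the prior data. The former follows inductively from \Cref{ratmap}: each intermediate unique completion is a birational equivalence from the used specified entries onto the rank-$k$ submatrix variety of the current size, so genericity is preserved. Chaining the resulting $2(n-k)$ unique completions produces a single unique rank-$k$ completion of the whole $n\times n$ matrix, establishing the claim.
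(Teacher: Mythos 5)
Your proof is correct and follows essentially the same route as the paper's: both arguments grow a fully specified $k\times k$ seed block to the whole matrix by repeatedly appending a row or column carrying exactly $k$ specified entries and invoking \Cref{kcore}. The only difference is bookkeeping --- the paper organizes this as an induction on $n$ that peels off the first row and column (so its seed is a central block on the anti-diagonal) while you build up explicitly from the bottom-left corner block, and your explicit appeal to \Cref{ratmap} to propagate genericity of the intermediate completions makes precise a point the paper leaves implicit.
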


\begin{proof}
 Define
  \[ \struc{S_{n,k}} = \{(i,j) \in [n]\times [n] \mid n-k+1 < i + j \leq n+k+1 \}. \]
 Fix the value of $k$ and proceed by induction on $n$.  For $n = k$, there are no unspecified entries so $S$ is generically rank $k$.  For $n > k$ let $A \in \B C^{S_{n,k}}$ be a partially filled matrix, and let $B$ be the submatrix of the last $n-1$ rows and last $n-1$ columns.  The set of specified entries of $B$ are $S_{n-1,k}$ after reversing the order of the rows and columns.  By the induction hypothesis, $B$ has a unique completion to rank $k$.  The last $n-1$ entries of the first column of $A$ have $k$ specified entries so they can be completed to be in the column span of the completion of $B$.  With those entries filled, the first row of $A$ has $k$ specified entries, and can be completed to be the row span of the partial filling of $A$.  See the right side of \Cref{fig62} for the case of $n=6$ and $k=2$.
\end{proof}
 
\begin{figure}
\begin{minipage}{.35\textwidth}
 \centering
\begin{tikzpicture}
\foreach \x in {0,...,5}
  \foreach \y in {0,...,5}
    \draw[fill=black] (\x*0.5,\y*-0.5) circle (3pt);
\foreach \x in {0,...,3}
  \foreach \y in {0,...,3}
    \draw[fill=white] (\x*0.5,\y*-0.5) circle (3pt);
%%% squares
\draw (1.75,.25) rectangle (2.75,-2.75);
\draw (1.25,.25) rectangle (2.75,-2.75);
\draw (.75,.25) rectangle (2.75,-2.75);
\draw (.25,.25) rectangle (2.75,-2.75);
\draw (-.25,.25) rectangle (2.75,-2.75);
\end{tikzpicture}
\end{minipage}
\begin{minipage}{.35\textwidth}
\centering
\begin{tikzpicture}
\foreach \x in {0,...,5}
  \foreach \y in {0,...,5}
    \draw[fill=black] (\x*0.5,\y*-0.5) circle (3pt);
\draw[fill=white] (0,0) circle (3pt);
\draw[fill=white] (0,-.5) circle (3pt);
\draw[fill=white] (0,-1) circle (3pt);
\draw[fill=white] (0,-1.5) circle (3pt);

\draw[fill=white] (.5,0) circle (3pt);
\draw[fill=white] (.5,-.5) circle (3pt);
\draw[fill=white] (.5,-1) circle (3pt);

\draw[fill=white] (1,0) circle (3pt);
\draw[fill=white] (1,-.5) circle (3pt);

\draw[fill=white] (1.5,0) circle (3pt);
\draw[fill=white] (1.5,-2.5) circle (3pt);

\draw[fill=white] (2,-2) circle (3pt);
\draw[fill=white] (2,-2.5) circle (3pt);

\draw[fill=white] (2.5,-1.5) circle (3pt);
\draw[fill=white] (2.5,-2) circle (3pt);
\draw[fill=white] (2.5,-2.5) circle (3pt);
%%% squares
\draw (.75,-.75) rectangle (1.75,-1.75);
\draw (.75,-.75) rectangle (2.25,-1.75);
\draw (.75,-.75) rectangle (2.25,-2.25);
\draw (.25,-.75) rectangle (2.25,-2.25);
\draw (.25,-.25) rectangle (2.25,-2.25);
\draw (.25,-.25) rectangle (2.75,-2.25);
\draw (.25,-.25) rectangle (2.75,-2.75);
\draw (-.25,-.25) rectangle (2.75,-2.75);
\draw (-.25,.25) rectangle (2.75,-2.75);
\end{tikzpicture}
\end{minipage}
\caption{Black and white dots represent specified and unspecified entries respectively in a $6\times 6$ matrix.  Each case starts with a generic rank 2 matrix and a partially filled row or column is added one at a time with exactly 2 specified entries.  Both sets of 16 unspecified entries are generically uniquely completable to corank $4 = \sqrt{16}$.}
\label{fig62}
\end{figure}
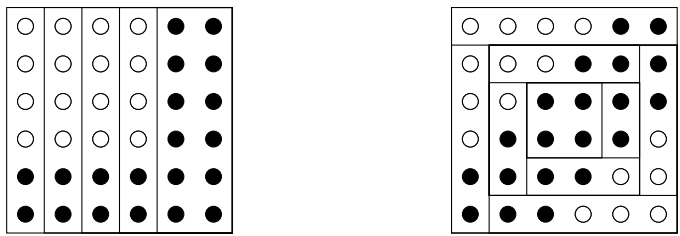

Both \Cref{codimc} and \Cref{diagstrip} can be seen partially as consequences of Corollary~4.5 of \cite{BBS} as well.  For this, recall the notion of a $k$-core from graph theory.

\begin{definition}
The {\em \struc{$k$-core}} of a graph $G$ is the maximal connected subgraph of $G$ in which all vertices have degree at least $k$. Equivalently, it is one of the connected components of the subgraph of $G$ formed by repeatedly deleting all vertices of degree less than $k$.
\end{definition}
In \Cref{codimc}, the set of specified entries has empty $(n-r+1)$-core, so the maximum typical rank is less than $n-r+1$, meaning it has a real completion to corank $r$.  In \Cref{diagstrip}, the $(n-k+1)$-core is empty.  However, in each case we also show that the completion is unique.
%\mareike{we have to double-check uniqueness}

\medskip
The following is a corollary of \Cref{codimc}.

 \begin{corollary}
  $S \subseteq [n+k]\times [n]$ has $0$ as a typical corank if and only if the entries of $S$ include an $n\times n$ submatrix.
 \end{corollary}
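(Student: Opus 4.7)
The plan is to prove the two directions separately, noting first that since the matrix has only $n$ columns, an $n\times n$ submatrix of $S$ is carved out by choosing $n$ rows that are \emph{fully} specified; so ``$S$ contains an $n\times n$ submatrix'' is equivalent to ``at least $n$ rows of $[n+k]$ have every column entry in $S$''.

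For the implication ($\Leftarrow$), suppose $S$ contains the entries of some $n \times n$ submatrix, and fix a generic partial filling $A \in \B R^S$. The $n$ fully specified rows are generic and thus span $\B R^n$, so any completion of $A$ has rank at least $n$; since the full matrix has only $n$ columns, the rank is exactly $n$. This holds on a Zariski open subset of $\B R^S$, so $n$ is the unique typical rank and $0$ is a typical corank.

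For the converse ($\Rightarrow$) I argue the contrapositive. Assume $S$ contains no $n\times n$ submatrix, so the number of fully specified rows is $s < n$. For a generic $A \in \B R^S$, these $s$ rows span an $s$-dimensional subspace $\Lambda \subseteq \B R^n$ with $s \leq n-1$. The orthogonal complement $\Lambda^{\perp}$ has dimension $n-s \geq 1$, and for generic $\Lambda$ it contains vectors $\alpha$ with every coordinate nonzero (the coordinate hyperplanes cut $\Lambda^{\perp}$ in proper subspaces). Fix such an $\alpha$ and set $H = \alpha^{\perp}$, a hyperplane containing $\Lambda$. Each row $i$ not in the fully specified set has at least one unspecified entry, say in column $j$; since $\alpha_j \neq 0$, the single linear equation $\alpha \cdot v_i = 0$ can be solved for that one coordinate (filling any remaining unspecified entries of the row arbitrarily). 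The resulting completion has every row in $H$, hence rank at most $n-1$, so on an open subset of $\B R^S$ the lowest real completion rank is at most $n-1$. Therefore $n$ is not a typical rank, and $0$ is not a typical corank.

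The only slightly delicate step is the hyperplane construction in the reverse direction, but this is essentially a dimension count; alternatively one can reach the same conclusion by appending $n-1-s$ generic rows to the $s$ specified rows to form a generic rank-$(n-1)$ matrix, then applying \Cref{kcore} row-by-row to each of the partial rows, each of which has at most $n-1$ specified entries.
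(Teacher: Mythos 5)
Your proof is correct. The forward direction (a fully specified, hence generically invertible, $n\times n$ submatrix forces every completion to have rank $n$) is exactly the paper's argument. For the converse, the paper simply cites \Cref{kcore}: with $s<n$ fully specified rows one forms a generic rank-$(n-1)$ matrix and appends the remaining partial rows one at a time, each having at most $n-1$ specified entries --- which is precisely the alternative you sketch in your final sentence. Your primary argument, choosing $\alpha\in\Lambda^{\perp}$ with all coordinates nonzero and completing every partial row into the hyperplane $\alpha^{\perp}$, is a genuinely more elementary and self-contained route: it avoids invoking \Cref{kcore} altogether and produces an explicit real completion of rank at most $n-1$. The hyperplane step is sound, since for a generic partial filling the span $\Lambda$ of the $s<n$ fully specified rows contains no coordinate vector $e_j$, so $\Lambda^{\perp}$ is not contained in any coordinate hyperplane; and because this holds on a Zariski dense subset of $\B R^{S}$, the rank-$n$ completion is never minimal generically, so $0$ is not a typical corank. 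Both routes are valid; yours trades the black-box proposition for a two-line linear-algebra construction, while the paper's citation of \Cref{kcore} keeps the proof uniform with the surrounding results.
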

 \begin{proof}
 	Let $A \in \B R^S$ be a generic real partially filled matrix that includes a completely specified $n\times n$ submatrix.  This submatrix has full rank, so any completion of $A$ has rank $n$. The other direction follows from \Cref{kcore}.
\end{proof}

The next result shows an explicit bipartite graph exhibiting two typical coranks.
 
 \begin{proposition}\label{4diag}
  $\tc(G(4,1)^\comp) = \{1,2\}$.
 \end{proposition}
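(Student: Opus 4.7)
The plan is to show the typical coranks of $G(4,1)^\comp$ are exactly $\{1,2\}$, i.e., the typical ranks of a $4\times 4$ matrix with its twelve off-diagonal entries specified are $\{2,3\}$ (the case already discussed in the example in \Cref{Sec:Prelim}). By \Cref{thm:trHierarchy}, the typical ranks are a set of consecutive integers whose minimum equals the generic completion rank, so it suffices to check: (a) the generic completion rank equals $2$; (b) rank $4$ is not typical; and (c) there is a Euclidean-open set of real partial fillings admitting no real rank-$2$ completion.

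For (a) I would argue by dimensions. The variety of $4\times 4$ matrices of rank at most $2$ has dimension $2(4+4-2)=12$, matching $|G(4,1)^\comp|$; taking a generic matrix of the form $uv^\tran + u'v'^\tran$ shows the projection onto the off-diagonal coordinates is dominant, so the generic completion rank is at most $2$. It is exactly $2$ because the rank-$1$ variety is only $7$-dimensional. For (b), note that $\det A$ is multilinear, hence separately linear, in the four unspecified diagonal entries. For a generic real partial filling I can assign $a_{11},a_{22},a_{33}$ real generic values and solve the resulting nontrivial linear equation $\det A=0$ uniquely for $a_{44}\in\mathbb{R}$; for a generic choice some $3\times 3$ minor of the completed matrix remains nonzero, giving a rank-$3$ real completion on an open dense subset of fillings, which precludes $4$ as a typical rank.

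The main obstacle is (c). I would establish it by invoking the explicit construction of \cite{BBS}, which exhibits a real partial filling $A_0$ of this entry pattern whose four complex rank-$2$ completions are all non-real. Since the four completions depend continuously on the specified entries and every non-real complex number has an open neighborhood disjoint from $\mathbb{R}$, the property of having all four rank-$2$ completions non-real persists on an open Euclidean ball around $A_0$. On this ball no real rank-$2$ completion exists; combined with (b), the minimum real completion rank there is exactly $3$, so $3$ is a typical rank. Together with (a), this yields $\tc(G(4,1)^\comp) = \{1,2\}$, as claimed.
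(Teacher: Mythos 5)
Your argument is correct and rests on the same key ingredient as the paper's proof, namely the explicit real partial filling from Example~3.1 of \cite{BBS} whose four rank-$2$ completions are all non-real; the paper simply cites that example for the entire $4\times 4$ case, whereas you spell out the surrounding routine steps (the dimension count for the generic completion rank, the exclusion of rank $4$, and the openness of the ``all completions non-real'' condition). The only thing to add is an appeal to \Cref{grow}, as the paper does, to pass from the $4\times 4$ case to $([n]\times[n])\setminus G(4,1)$ for general $n$, which is what the notation $\tc(G(4,1)^\comp)$ refers to.
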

 \begin{proof}
  Example 3.1 of \cite{BBS} proves that for $4\times4$ matrices the typical coranks of $U^\comp$ are 1 and 2.  This and \Cref{grow} imply the statement for general square matrices.
 \end{proof}

By algebraic arguments we can deduce that an entry set $S$ is generically real completable to a complex variety $V$, if this is true for a subvariety of $V$.
  \begin{proposition}\label{uniquetoreal}
  If $V$ is a complex variety with a subvariety $W$ cut out by real polynomials such that $S$ is generically uniquely completable to $W$, then $S$ is generically completable to $V$ over the reals.
 \end{proposition}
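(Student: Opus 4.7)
The plan is to use \Cref{ratmap} as the engine and then observe that everything can be kept defined over $\B R$. Since $S$ is generically uniquely completable to $W$, the projection $\pi_S|_W\colon W \to \B C^S$ is a birational equivalence. First I would record that $W$ is an algebraic variety defined over $\B R$ (this is the content of the hypothesis that $W$ is cut out by real polynomials), so the ideal $\B I(W)$ has a generating set in $\B R[T]$, and the projection $\pi_S$ itself is obviously defined over $\B R$. Consequently the inverse birational map $\varphi\colon \B C^S \dashrightarrow W$ is defined over $\B R$ as well: each coordinate of $\varphi$ is a rational function whose numerator and denominator can be chosen with real coefficients, since any non-real component of the standard representation could be stripped away by taking real and imaginary parts and using that the resulting map still inverts $\pi_S$ on a Zariski dense subset.

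Next I would use this to produce real completions. Let $p \in \B R^S$ be generic; in particular $p$ is outside the indeterminacy locus of $\varphi$ and outside the proper real subvariety where $\pi_S|_W$ fails to be one-to-one. Then $\varphi(p)$ is defined and its coordinates are rational expressions with real coefficients evaluated at the real point $p$, so $\varphi(p) \in \B R^T$. Since $\varphi(p) \in W$ and $W \subseteq V$, the point $\varphi(p)$ is an element of $\pi_S^{-1}(p) \cap V$. Thus a generic $p \in \B R^S$ has a real completion to $V$, which is exactly the conclusion that $S$ is generically completable to $V$ over the reals.

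The only subtle point I expect is making rigorous the claim that the inverse rational map can be taken to have real coefficients. I would handle this either by quoting that a birational equivalence between varieties defined over a subfield $k$ is itself defined over $k$ (a standard fact in algebraic geometry, following from the identification of function fields $\B R(W) = \B R(\pi_S|_W(W))$ when $\pi_S|_W$ is birational and $W$ is defined over $\B R$), or, more hands-on, by writing $\varphi = \varphi_1 + i\varphi_2$ for rational maps $\varphi_1,\varphi_2$ over $\B R$, and observing that on the Zariski dense set $\B R^S \cap \operatorname{dom}(\varphi)$ the image $\varphi(p)$ must already lie in $W \subseteq \B C^T$ and project back to $p \in \B R^S$; uniqueness then forces $\varphi_2 \equiv 0$ on a Zariski dense set, hence identically.
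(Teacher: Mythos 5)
Your proof is correct, but it reaches the conclusion by a more roundabout route than the paper. The paper's proof works directly with the fiber: for a generic real $p$, the vanishing ideal of $\pi_S^{-1}(p)\cap W$ is $(S-p)+\B I(W)$, which has real generators, so the fiber is stable under complex conjugation; since it consists of a single point $q$, conjugation fixes $q$, hence $q$ is real, and $q\in W\subseteq V$. You instead route the argument through \Cref{ratmap}, upgrading the birational equivalence $\pi_S|_W$ to an inverse rational map defined over $\B R$ and evaluating it at real points. This is fine, but note that your ``standard fact'' is slightly misstated as quoted (a birational map between varieties defined over $k$ need not itself be defined over $k$; what you need is that the \emph{inverse of a map defined over $k$} is defined over $k$, by Galois equivariance and uniqueness of the inverse), and the function-field phrasing tacitly assumes $W$ is irreducible, which the hypothesis does not grant. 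Your hands-on fallback --- conjugating $\varphi(p)$ and invoking uniqueness of the completion to force the imaginary part to vanish --- is exactly the paper's conjugation argument in rational-map clothing, and it is what actually closes the gap; the paper's version simply applies it to the single fiber point without any birational machinery, which makes it both shorter and free of irreducibility concerns.
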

 \begin{proof}
  Over $\B C$, the Zariski closure of $\B R^S$ is $\B C^S$, so there is a generic point $p \in \B C^S$ that is real.  The vanishing ideal of $\pi_S^{-1}(p) \cap W$ is $(S - p) + \B I(W)$ which has real generators.  For a point $q \in \pi_S^{-1}(p) \cap W$, the complex conjugate of $q$ is also in $\pi_S^{-1}(p) \cap W$, but $q$ is unique so it must be real.  Therefore $S$ is generically completable to $W$ over the reals.  Any real completion of $S$ to $W$ is also a completion to $V$.
  \end{proof}

Note that in particular, every variety defined by the vanishing of minors is cut out by real polynomials, including matrix rank varieties.

\medskip
For an $n\times m$ matrix $\struc{A}$ and sets $\struc{P} \subseteq [n]$ and $\struc{Q} \subseteq [m]$, let $\struc{A^Q}$ denote the submatrix of $A$ with column set $Q$, and $\struc{A_P}$ the submatrix of $A$ with row set $P$.  The submatrix $\struc{A^Q_P}$ has column set $Q$ and row set $P$.

\begin{lemma}\label{rcols}
 Suppose that $S \subseteq [n]\times[m]$ is generically uniquely completable to rank $r$.  Let $A\in \B C^S$ be a generic partial filling of $S$, and $A'$ its unique completion to rank $r$.  Then $A'$ is a generic rank-$r$ matrix.  In particular, ${A'}^{[r]}$ is a generic $n\times r$ matrix.
\end{lemma}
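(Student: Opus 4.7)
The plan is to use the birational structure provided by Proposition~\ref{ratmap} and then transport genericity across the birational map and a subsequent column projection.

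First I would observe that since $S$ is generically uniquely completable to $V$ (the variety of $n\times m$ matrices of rank at most $r$, which is irreducible), the projection $\pi_S|_V\colon V \to \B C^S$ is a birational equivalence by Proposition~\ref{ratmap}. Let $\varphi\colon \B C^S \dashrightarrow V$ denote its rational inverse, so that $A' = \varphi(A)$ for generic $A \in \B C^S$. The key point to establish is that $\varphi$ sends generic points to generic points. Concretely, fix any Zariski dense open subset $W \subseteq V$. Then the complement $V \setminus W$ is a proper closed subset, and $\pi_S(V \setminus W)$ is constructible of strictly smaller dimension than $\B C^S$ (using that $\pi_S|_V$ is generically injective, hence has generically zero-dimensional fibers of the same dimension on both sides). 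Therefore its Zariski closure is a proper closed subset of $\B C^S$, and outside this closed subset $A'$ lies in $W$. This is exactly the statement that $A'$ is a generic rank-$r$ matrix.

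Next I would handle the ``in particular'' part by showing the column-restriction map $\rho\colon V \to \B C^{n\times r}$, $M \mapsto M^{[r]}$, is dominant. Using the parametrization of rank-$\leq r$ matrices as products $BC$ with $B \in \B C^{n\times r}$ and $C \in \B C^{r\times m}$, the first $r$ columns of $BC$ are $B \cdot C^{[r]}$, where $C^{[r]}$ is a generic $r\times r$ matrix and therefore invertible. Since $B$ ranges over all of $\B C^{n\times r}$ and $C^{[r]}$ is generically invertible, the product $B\cdot C^{[r]}$ ranges over a Zariski dense subset of $\B C^{n\times r}$, proving dominance of $\rho$. Consequently $\rho$ pulls back Zariski dense open subsets of $\B C^{n\times r}$ to Zariski dense open subsets of $V$, so applying the previous paragraph to $W = \rho^{-1}(W')$ for $W' \subseteq \B C^{n\times r}$ open dense shows that ${A'}^{[r]} = \rho(A')$ avoids any prescribed proper closed subset of $\B C^{n\times r}$ for generic $A$.

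The only point that requires care—and I would flag this as the main technical obstacle—is justifying the dimension/closure argument used to push genericity through the birational map $\varphi$, i.e.\ verifying that the image under $\pi_S|_V$ of a proper closed subset of $V$ is contained in a proper closed subset of $\B C^S$. This follows from the fact that $\pi_S|_V$ is birational, hence restricts to an isomorphism between open dense subsets, so the preimage of any open dense subset of $\B C^S$ is open dense in $V$, and conversely; equivalently, a rational map between irreducible varieties of the same dimension that is birational maps generic points to generic points. Everything else is formal.
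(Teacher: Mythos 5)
Your argument is correct and is essentially the paper's own proof: the paper also cites Proposition~\ref{ratmap} to conclude that $\pi_S|_V$ is a birational equivalence and that the preimage of a generic $A$ is therefore a generic point of the rank-$r$ variety. You simply spell out the details the paper leaves implicit (the dimension argument for pushing genericity through the birational map, and the dominance of $M \mapsto M^{[r]}$ for the ``in particular'' clause), and these details check out.
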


\begin{proof}
 By \Cref{ratmap}, the map from the variety of rank-$r$ matrices to $\B C^S$ is a birational equivalence, thus the preimage of $A$ is a generic point on the variety.
\end{proof}

We now turn our attention towards typical coranks of $G(n,1)^\comp$. 
In \cite[Proposition~5.7]{BBS} it is proved that $\gcc(G(n,1)^\comp) = \lfloor \sqrt{n} \rfloor$, which is the bound given by the dimension count.  This is therefore the maximum typical corank.  In Question~5.8 of the original version the authors ask whether 1 is always the minimum typical corank of $G(n,1)^\comp$, which is true for $n \leq 4$.  The subsequent proposition gives a negative answer for $n \geq 5$.

\begin{proposition}\label{5diag}
 $\tc(G(5,1)^\comp) = \{2\}$.
\end{proposition}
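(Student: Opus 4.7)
My plan is to apply \Cref{uniquetoreal} with a suitable subvariety $W$ of the rank-$3$ matrix variety $V$ inside $\B C^{[5]\times[5]}$. By \cite[Proposition~5.7]{BBS} we have $\gcc(G(5,1)^\comp) = 2$, so the generic completion rank is $3$ and \Cref{thm:trHierarchy} forces the typical ranks to be consecutive integers starting at $3$. It therefore suffices to exhibit a real rank-$3$ completion on a Zariski dense subset of $\B R^S$, where $S := G(5,1)^\comp$; this will pin $3$ as both the minimum and maximum typical rank, so $\{2\}$ is the only typical corank.

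Take $W \subseteq V$ to be the subvariety of $5\times 5$ matrices $M$ satisfying $\rank M \leq 3$ together with the extra condition $\rank M^{\{1,2,3\}} \leq 2$. This variety is cut out by the $4\times 4$ minors of $M$ and the $3\times 3$ minors of $M^{\{1,2,3\}}$, which are real polynomials, so to invoke \Cref{uniquetoreal} it remains to show that $S$ is generically uniquely completable to $W$. I will do this with two successive applications of \Cref{kcore}.

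In the first stage, I apply \Cref{kcore} to the $5\times 3$ submatrix $A^{\{1,2,3\}}$ of a generic partial filling $A \in \B C^S$. Its only unspecified entries are the diagonal positions $(1,1), (2,2), (3,3)$, while rows $4$ and $5$ are fully specified and generically form a rank-$2$ matrix. Adding rows $3, 2, 1$ in turn, each with exactly two specified entries, \Cref{kcore} with $r = 2$ produces a unique rank-$2$ completion of $A^{\{1,2,3\}}$, thereby filling in the three diagonal entries; call the resulting matrix $A'$. By \Cref{rcols}, $A'^{\{1,2,3\}}$ is a generic rank-$2$ matrix. In the second stage I apply \Cref{kcore} to $A'^{\{2,3,4,5\}}$ with $r = 3$: rows $1, 2, 3$ of this submatrix combine the generic $3\times 2$ top block of columns $2, 3$ (of the generic rank-$2$ matrix produced in the first stage) with the generic $3\times 2$ block of original entries in columns $4, 5$, so they form a generic rank-$3$ matrix. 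Rows $4$ and $5$ then contribute three specified entries each, with only $(4,4)$ and $(5,5)$ unspecified, so \Cref{kcore} yields a unique rank-$3$ completion of $A'^{\{2,3,4,5\}}$. The completed $5\times 5$ matrix lies in $W$: columns $2, 3, 4, 5$ span a $3$-dimensional subspace, and column $1$ lies in the span of columns $2, 3$ by the first-stage rank-$2$ condition. Uniqueness is preserved throughout.

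Applying \Cref{uniquetoreal} then guarantees that a generic real partial filling admits a real completion in $W \subseteq V$, which is the desired real rank-$3$ completion. The step requiring the most care is the genericity claim in the second stage, namely that rows $1, 2, 3$ of $A'^{\{2,3,4,5\}}$ remain of generic rank $3$ after the rational substitution performed in the first stage; this is handled cleanly by \Cref{rcols}, which ensures that the first-stage output is itself a generic rank-$2$ matrix, so its top $3\times 2$ block in columns $2, 3$ has generic rank $2$, and together with the generic original block in columns $4, 5$ this produces the required rank.
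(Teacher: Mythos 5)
Your proposal is correct and follows essentially the same route as the paper: the same subvariety $W$ (rank $\leq 3$ with the first three columns dependent), the same two-stage unique completion via \Cref{kcore} applied first to $A^{\{1,2,3\}}$ and then to ${A'}^{\{2,3,4,5\}}$, and the same appeal to \Cref{uniquetoreal} plus the dimension count. Your explicit invocation of \Cref{rcols} to justify genericity of the intermediate rank-$2$ completion is a welcome bit of added care, but it does not change the argument.
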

\begin{proof}
Let $V$ be the variety of $5\times 5$ matrices with rank 3, and let $W$ be the subvariety of $V$ of matrices in which the first 3 columns are linearly dependent.

With $S = G(5,1)^\comp$, let $A \in \B C^S$ be a generic partially filled $5\times 5$ matrix.

The submatrix $A^{\{1,2,3\}}$ has unspecified entries $(1,1),(2,2),(3,3)$ so by \Cref{kcore} it has a unique completion to rank $2$.  Let $A'$ be the partially filled matrix obtained by this completion of $A^{\{1,2,3\}}$.  Since ${A'}^{\{1,2,3\}}$ is a generic rank 2 matrix, the second and third columns form a generic $5\times 2$ matrix.  By \Cref{kcore}, ${A'}^{\{2,3,4,5\}}$ can be uniquely completed to rank 3.  The result is the unique completion of $A$ to $W$.

 By \Cref{uniquetoreal}, $G(5,1)^\comp$ is generically real completable to corank 2.  By a dimension count, $G(5,1)^\comp$ cannot generically be completed to corank greater than 2.
\end{proof}

\begin{remark}
 The principle we want to emphasize in the proof of \Cref{5diag} is that a generic real partially filled matrix for $G(5,1)^\comp$ has an infinite number (dimension 1) of complex completions to corank 2, but it is difficult to show that any of them are real.  By constraining the problem by adding the extra dependence condition on the first three columns, the problem becomes easier.  There is a unique complex completion that is clearly real.
\end{remark}

\Cref{5diag} implies that 1 is never a typical corank of $U^\comp = G(n,1)^\comp$ for $n \geq 5$ since adding additional diagonal entries to $U$ makes it strictly easier to complete to corank 2 over the reals.  In fact we can now say the following.
\begin{corollary}\label{5to8diag}
$\tc(G(n,1)^\comp) = \{2\}$ for $5 \leq n < 9$.
\end{corollary}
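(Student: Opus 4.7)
The plan is to reduce the cases $n \in \{6,7,8\}$ to the case $n = 5$ handled by \Cref{5diag}, using \Cref{grow} together with a monotonicity observation for the unspecified set. Proposition~5.7 of \cite{BBS} gives $\gcc(G(n,1)^\comp) = \lfloor \sqrt{n}\rfloor = 2$ for $5 \leq n \leq 8$, so by \Cref{thm:trHierarchy} the largest typical corank is exactly $2$ in this range. It therefore suffices to show that a generic real partial filling of $G(n,1)^\comp$ admits a real completion to rank $n - 2$; since $n - 2 = \gcr(G(n,1)^\comp)$, this forces $n - 2$ to be the only typical rank and hence $\tc(G(n,1)^\comp) = \{2\}$.

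To produce such a completion, I would view $U := G(5,1) = \{(1,1),\ldots,(5,5)\}$ as a subset of $[n]\times[n]$. \Cref{5diag} says that rank $3$ is generically achievable over $\B R$ for $G(5,1)^\comp$ in $[5]\times[5]$; applying \Cref{grow} with $k = n - 5$ then gives $\tc([n]\times[n]\setminus U) = \{2\}$, so rank $n - 2$ is generically achievable over $\B R$ for partial fillings of $[n]\times[n]\setminus U$. Since $U \subseteq G(n,1)$ in $[n]\times[n]$, any generic $A \in \B R^{G(n,1)^\comp}$ can be lifted to $A' \in \B R^{[n]\times[n]\setminus U}$ by prescribing generic real values at $(6,6),\ldots,(n,n)$. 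The forgetful linear projection $\B R^{[n]\times[n]\setminus U} \to \B R^{G(n,1)^\comp}$ is surjective, so the Zariski open dense locus of $A'$ admitting a real rank-$(n-2)$ completion projects onto a Zariski open dense set in $\B R^{G(n,1)^\comp}$. Hence for generic $A$ one may arrange $A'$ to lie in this locus, and any real rank-$(n-2)$ completion of $A'$ agrees with $A$ on $G(n,1)^\comp$, yielding the desired completion of $A$.

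The main point to be careful about is the monotonicity step — intuitively, enlarging the unspecified set only makes low-rank real completion easier — which boils down to the elementary fact that a surjective linear projection sends Zariski open dense sets to Zariski open dense sets. Granting this, combining the achievability of rank $n - 2$ with the upper bound $\gcc = 2$ immediately gives $\tc(G(n,1)^\comp) = \{2\}$ for $5 \leq n \leq 8$.
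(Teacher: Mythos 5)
Your proof is correct and follows essentially the same route as the paper: reduce to $n=5$ via \Cref{5diag} and \Cref{grow}, observe that enlarging the unspecified set from $G(5,1)$ to $G(n,1)$ only makes real completion to corank $2$ easier (the paper phrases this as ``adding additional diagonal entries to $U$ makes it strictly easier to complete to corank~2 over the reals''), and cap the typical coranks using $\gcc(G(n,1)^\comp)=\lfloor\sqrt{n}\rfloor=2$ together with \Cref{thm:trHierarchy}. The only quibble is your justification of the monotonicity step: rather than images of open sets under the projection, the cleaner statement is that the locus of partial fillings of $U^\comp$ with no real corank-$2$ completion is lower-dimensional, so a generic fiber of the forgetful projection is not contained in it and a generic choice of the extra diagonal values works.
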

\begin{proof}
This follows directly from the above considerations and because the generic completion corank is 2 in these cases.
\end{proof}

The generic completion corank of $G(9,1)^\comp$ goes up to 3.  We do not know the full set of typical coranks for $n\geq 9$. Therefore it would be interesting to address the following question. 

\begin{question}
Is $2$ a typical corank of $G(9,1)^\comp$?
\end{question}
 
 Experimental evidence suggests that the number of generic completions of $G(9,1)^\comp$ to corank 3 is probably 44 (compared to 4 generic completions of $G(4,1)^\comp$ to corank 2). The above question is equivalent to whether there are generic real partial fillings such that all completions are non-real.

\medskip
For square matrices, we fully characterize the sets $U$ with $1 \in \tc(U^\comp)$.
 \begin{theorem}\label{thm:Characterization1isTypicalCR}
  Let $U$ be a set of unspecified entries.  Then $1$ is a typical corank of $U^\comp$ if and only if $U$ is in one of the following cases:
  \begin{enumerate}
   \item\label{case:block} $|U| \geq 1$ and all entries are in the union of a single column and a single row.
   \item\label{case:3} $U = G(3,1)$.
   \item\label{case:4} $U = G(4,1)$.
  \end{enumerate}
 \end{theorem}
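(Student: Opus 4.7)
My plan is to prove both directions of the biconditional using the tools developed earlier in the section. For the sufficiency (``if'') direction, I handle each of the three cases separately. In case (1), when $U$ is contained in the union of row $r_0$ and column $c_0$, the $(n-1)\times(n-1)$ submatrix obtained by deleting row $r_0$ and column $c_0$ is fully specified and has generic rank $n-1$, forcing any completion to have rank at least $n-1$. Applying \Cref{kcore} to the partially filled row $r_0$ and column $c_0$ produces a real rank-$(n-1)$ completion, so the unique typical corank is $\{1\}$. For case (2), \Cref{grow} reduces the problem to the $3\times 3$ setting, where the determinant is linear in each diagonal unknown and can therefore be solved explicitly for one of them over the reals; rank-$1$ completions are ruled out by a dimension count on the $5$-dimensional rank-$1$ variety, so the typical corank is exactly $\{1\}$. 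Case (3) is a direct invocation of \Cref{4diag}.

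For the necessity (``only if'') direction, I assume $U$ is not in any of the three cases and aim to show that $1 \notin \tc(U^\comp)$. The strategy is to exhibit, for a Zariski dense set of partial fillings $A \in \B R^{U^\comp}$, a real completion of rank at most $n-2$; this forces the minimum typical corank to be at least~$2$. The case $U = \emptyset$ is immediate since then $\tc(U^\comp) = \{0\}$. Otherwise $U$ is nonempty, and since case~(1) fails the bipartite graph of $U$ has no vertex cover of size~$2$; by K\"onig's theorem, its maximum matching has size at least~$3$, so after row and column permutations (which preserve typical coranks) we may assume $G(3,1) \subseteq U$.

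The cleanest sub-case is when $U$ contains a $5$-matching, i.e., $G(5,1) \subseteq U$ after permutation. Here I use monotonicity together with \Cref{5diag}: for generic $A \in \B R^{U^\comp}$, extending $A$ to $A' \in \B R^{G(5,1)^\comp}$ by specifying generic real values on the entries in $U \setminus G(5,1)$ produces a generic $A'$, which by \Cref{5diag} combined with \Cref{grow} admits a real rank-$(n-2)$ completion, and this completion restricts to a real rank-$(n-2)$ completion of $A$. The remaining sub-case is when the maximum matching of $U$ is exactly $3$ or $4$ with $U \neq G(3,1), G(4,1)$, so $U$ strictly contains $G(k,1)$ for some $k \in \{3,4\}$. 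Following the template of \Cref{5diag}, I construct a subvariety $W$ of the rank-$(n-2)$ variety cut out by real polynomials---enforcing an additional linear dependence among the columns or rows indexed by the matching---such that $U^\comp$ is generically uniquely completable to $W$ via successive applications of \Cref{kcore}; \Cref{uniquetoreal} then produces the real rank-$(n-2)$ completion. The main obstacle will be this final sub-case: the configuration of $U$ varies considerably with the placement of the extra edges beyond the matching (inside versus outside the rows and columns supporting the matching), so constructing an appropriate $W$ and verifying unique kcore-based completability requires a case-by-case analysis tailored to each configuration.
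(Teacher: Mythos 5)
Your ``if'' direction is fine and essentially matches the paper's argument. The problem is in the ``only if'' direction, at the very first reduction. Case (\ref{case:block}) says that $U$ lies in the union of \emph{one row and one column}, i.e.\ it excludes only those vertex covers of size two that use one vertex from each side of the bipartition. The negation of case (\ref{case:block}) therefore does \emph{not} imply that $U$ has no vertex cover of size two: $U$ may be covered by two rows or by two columns. For example $U=\{(1,1),(2,1),(3,2),(4,2)\}$ is covered by columns $1$ and $2$, is not in any of the three cases, and has maximum matching size $2$. K\"onig's theorem then gives matching number $2$, not $\geq 3$, so your conclusion that $G(3,1)\subseteq U$ after a permutation fails, and your entire case split --- which is organized by the size of a maximum matching assumed to be at least $3$ --- never reaches such configurations. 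These are precisely configurations the theorem must, and the paper does, handle: in its $|U|=4$ analysis it normalizes to two entries $(1,1),(2,1)$ in one column plus two further entries $(a,b),(c,d)$ with $b,d\neq 1$ and $a\neq c$ (note this allows $b=d$), deletes column $1$, completes the two stray entries to rank $n-2$ by \Cref{kcore}, then completes column $1$ into the resulting column space by a second application of \Cref{kcore}.

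Separately, even where your outline targets the right configurations, the final sub-case (matching number $3$ or $4$ with $U$ strictly containing $G(3,1)$ or $G(4,1)$) is not actually proved: you defer to ``a case-by-case analysis'' of an unconstructed subvariety $W$. The paper avoids this entirely by organizing the induction on $|U|$ rather than on the matching number: for $|U|\geq 5$, either all entries of $U$ lie in distinct rows and columns (handled by \Cref{5diag} plus monotonicity, as you do), or some line contains two entries of $U$, and then either $U$ is in case (\ref{case:block}) or there are two further entries outside that line in distinct rows, so the two-step \Cref{kcore} argument applies verbatim (the extra unspecified entries only help, since they can be specialized to generic real values). Replacing the K\"onig reduction with the dichotomy ``all entries in distinct rows and columns'' versus ``some row or column contains two entries of $U$'' both repairs the gap and makes your troublesome final sub-case disappear.
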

 \begin{proof}
  If $U$ is in case (\ref{case:block}) then for $n\times n$ matrices, there is an $(n-1)\times(n-1)$ fully specified submatrix. So $U^\comp$ cannot be completed to corank greater than $1$.  Since there is an unspecified entry, $U^\comp$ can always be completed to corank $1$ over the reals.  Therefore $\tc(U^\comp) = \{1\}$.
  
  If $U$ is in case (\ref{case:3}), $U^\comp$ cannot generically be completed to corank $2$ by a dimension count, so $\tc(U^\comp) = \{1\}$.
  
  If $U$ is in case (\ref{case:4}), then $\tc(U^\comp) = \{1,2\}$ by \Cref{4diag}.
  
  To prove that $1$ is not a typical corank of $U^\comp$ in all other cases, we proceed according to $|U|$.  If $|U|$ is 1, 2 or 3, then $U$ is in either case (\ref{case:block}) or (\ref{case:3}).  For $|U|=4$, consider the possibilities for how many unspecified entries share a single row or column.  If there are three or more entries in one row or one column, then $U$ is in case (\ref{case:block}).  If each entry is in a unique row and column then $|U|$ is in case (\ref{case:4}).  Therefore we can assume that there is a row or column with exactly 2 entries.  Without loss of generality, take these entries to be $(1,1)$ and $(2,1)$.  If the remaining two entries are in the same row, $U$ is again in case (\ref{case:block}), so assume they are $(a,b),(c,d)$ with $b,d \neq 1$ and $a \neq c$.
  
  Let $A \in \B R^{U^\comp}$ be an $n\times n$ generic partial filling.  The submatrix $A^{\{1\}^c}$ has 2 unspecified entries, $(a,b)$ and $(c,d)$ in different rows, so by \Cref{kcore}, it is real completable to rank $n-2$.  Fixing these two entries to get $A'$, the $n\times(n-1)$ submatrix ${A'}^{\{n\}^c}$ has 2 unspecified entries $(1,1)$ and $(2,1)$, and the rows excluding the first two are generically independent.  By \Cref{kcore}, it can also be completed over the reals to rank $n-2$.  By genericity, columns 1 and $n$ both are linearly dependent on the others, so the whole filled matrix has corank $2$.
  
  Next consider the case of $|U| \geq 5$.  If all entries of $U$ are in distinct rows and columns, \Cref{5diag} shows that 1 is not a typical corank. Otherwise without loss of generality there is a row or column with at least 2 entries of $U$, and without loss of generality we can take them to be $(1,1)$ and $(2,1)$. Then either $U$ is in case (\ref{case:block}) or there are two entries of $U$ not in column 1 and in different rows.  In this case, as shown above, $U^\comp$ is generically real completable to corank 2.
 \end{proof}

 Note that $U = G(4,1)$ is the only case where 1 is a typical corank but not the generic completion corank.  It is the only set of unspecified entries $U$ with $\gcc(U^\comp) \leq 2$ that has more than one typical corank. From the study above, we pose the following important questions for future research.
 
 \begin{question}
   Given $U$ with $\gcc(U^\comp) = 3$, when is $2$ a typical corank?
 \end{question}
 \begin{question}
   Can we characterize the sets $U$ such that $2 \in \tc(U^\comp)$?
 \end{question}

\section{Typical rank asymptotics}
\label{Sec:more circulant graph results}

In this section we dive deeper into the study of real completions of circulant graphs. We specifically analyze generic real completability of $G(n,1)^\comp$ and a certain subset of $G(n,k)^\comp$, which in turn yields answers for the general case $G(n,k)^\comp$ as well. This study leads to bounds on the growth rate of the generic ranks of $G(n,1)$ and $G(n,k)$ for increasing $n$, and hence, also for its typical ranks. 
\smallskip

We start by considering $G(n,1)^\comp$. 
 
\begin{proposition}\label{1circulant}
 For each $r \geq 0$, $G(n,1)^\comp$ is generically real completable to corank $r$ for all $n \geq (4^r-1)/3$.
\end{proposition}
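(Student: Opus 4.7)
The plan is to induct on $r$, following the blueprint of the proof of Proposition~\ref{5diag} (the $r=2$ case). The base cases $r = 0$ (trivial) and $r = 1$ (the single equation $\det A = 0$ has real solutions generically in $n \geq 1$ diagonal unknowns) are immediate. For the inductive step, assuming the statement for $r-1$, we focus on $N = n_r = 4 n_{r-1} + 1$; the case $n > n_r$ will follow because adding diagonal unspecifieds in a larger matrix cannot remove a real completability to corank $r$, since one can always fix the extra diagonal entries to generic real values and invoke the $N=n_r$ case together with Lemma~\ref{grow}.

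Generalizing the proof of Proposition~\ref{5diag}, the idea is to exhibit a subvariety $W$ of the rank-$(N-r)$ variety of $N\times N$ matrices such that $W$ is cut out by real polynomial equations (vanishing of suitable minors) and $G(N,1)^\comp$ is generically uniquely completable to $W$ over $\mathbb{C}$. Proposition~\ref{uniquetoreal} then promotes the unique complex completion to a real one, giving the desired real completability to corank $r$. The subvariety $W$ will impose rank drops on a chosen family of column sub-blocks $A^{C_1}, \dots, A^{C_r}$, each forcing one independent column dependence, so that together they accumulate to corank $r$ for the full matrix.

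The unique completion to $W$ is built step by step. At step $i$, once $A^{C_1}, \dots, A^{C_{i-1}}$ have been uniquely completed, the next block $A^{C_i}$ is completed to rank $|C_i|-1$ using Proposition~\ref{kcore} ($k$-core): a row- or column-subblock which is fully specified after the earlier steps serves as a generic rank-$(|C_i|-1)$ seed, and the remaining rows or columns are filled in one at a time, each carrying exactly $|C_i|-1$ specified entries to make the $k$-core completion unique. Lemma~\ref{rcols} ensures that after each step the completed block is a generic point of its rank stratum, so the subsequent $k$-core application is valid. The induction hypothesis for $r-1$ enters when the innermost sub-block is itself essentially a smaller instance of $G(m,1)^\comp$ that must be completed to corank $r-1$.

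The main obstacle will be pinning down the exact chain $C_1, \dots, C_r$ and verifying the dimension count at every step --- in particular ensuring that the $r$ column dependencies produced are linearly independent, so that the resulting matrix has corank exactly $r$ rather than less. The recursion $N = 4 n_{r-1} + 1$ arises precisely from balancing these dimension constraints: for $r = 2$ one recovers $|C_1| = 3$, $|C_2| = 4$, $N = 5$ as in Proposition~\ref{5diag}, and each additional level of the induction forces $N$ to grow by roughly a factor of $4$, matching $n_r = (4^r-1)/3$.
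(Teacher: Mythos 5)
Your high-level frame---induct on $r$, reduce to $n = c_r = (4^r-1)/3$ by monotonicity, and obtain real completions by forcing uniqueness of completions to auxiliary minor conditions---is the same as the paper's. But the actual content of the proof is the inductive construction, and you leave it unspecified: you describe $W$ as imposing corank-one conditions on a chain of column blocks $A^{C_1},\ldots,A^{C_r}$ of the original matrix and then flag ``pinning down the exact chain $C_1,\ldots,C_r$ and verifying the dimension count at every step'' as the main obstacle. That obstacle is the theorem. Moreover, the flat-chain picture does not deliver what you need from it: a column block of a square matrix with unspecified diagonal is a non-square block with one unspecified entry per selected column, not a smaller copy of $G(m,1)$, so your claim that ``the innermost sub-block is itself essentially a smaller instance of $G(m,1)^\comp$'' has no mechanism behind it; nor is it clear why $r$ successive corank-one column blocks would produce $r$ independent dependences in a matrix of size only $c_r$, or where your asserted factor of $4$ per level would come from in that setup.

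The missing ingredient is the Schur complement reduction, i.e.\ \Cref{grow} applied in the reverse direction. The paper's step from $c_r$ to $c_{r+1}=4c_r+1$ runs: (i) uniquely complete the last $2c_r+1$ rows to corank $1$ via \Cref{codimc}; (ii) drop the last row and Schur-complement away the now fully specified generic block, so that real completability of the resulting $4c_r\times(4c_r+1)$ matrix to corank $r+1$ is equivalent to real completability of a $2c_r\times(2c_r+1)$ matrix $B$ to the same corank; (iii) repeat with a column block of $B$ and a second Schur complement, landing on a generic $c_r\times c_r$ partial filling whose unspecified set is exactly $G(c_r,1)$; (iv) invoke the induction hypothesis there. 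Each unit of corank costs two halvings, which is precisely where $c_{r+1}=4c_r+1$ comes from---your proposal asserts the factor of $4$ but does not derive it. Finally, if you insist on routing the argument through a single global subvariety $W$ and \Cref{uniquetoreal} (rather than propagating real completability through \Cref{grow} level by level, as the paper does), you must strengthen the induction hypothesis from ``real completable to corank $r$'' to ``uniquely completable to a variety cut out by real minors''; as written, your induction hypothesis is too weak for the top-level completion to be unique.
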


\begin{proof}
 Let $\struc{c_r} = (4^r-1)/3$.  For $n \geq c_r$, $G(c_r,1) \subseteq G(n,1)$.  If $G(c_r,1)^\comp$ is generically real completable to corank $r$, then $G(n,1)^\comp$ is generically real completable to corank $r$.  So it suffices to prove the case of $n = c_r$.  Proceed by induction on $r$.  For $r=1$, a $1\times 1$ matrix with unspecified entry set, $G(c_1,1)^\comp$ is real completable to corank 1 by filling with a 0.
 
 For $r\geq1$, $c_{r+1}$ satisfies the recurrence relation $c_{r+1} = 4c_r + 1$.  Let $A$ be a generic partially filled $c_{r+1}\times c_{r+1}$ matrix with unspecified entries $G(c_{r+1},1)$. It follows from \Cref{codimc}, that the submatrix of the last $2c_r+1$ rows, $A_{\{2c_r+1,\ldots,4c_r+1\}}$ is uniquely completable to rank $2c_r$.  Let $A'$ be the partial filling of $A$ with these entries filled.  Since the filling of  $A_{\{2c_r+1,\ldots,4c_r+1\}}$ is unique, it is a generic rank $2c_r$ matrix, and its first $2c_r$ rows are generic.  So ${A'}_{\{1,\ldots,4c_r\}}$ is a generic partial filling of a $4c_r\times 4c_r+1$ matrix.  By \Cref{grow}, ${A'}_{\{1,\ldots,4c_r\}}$ is real completable to rank $4c_r - r$ if and only if the Schur complement $B = {A'}_{\{1,\ldots,4c_r\}}/{A'}^{\{2c_r+1,\ldots,4c_r+1\}}_{\{2c_r+1,\ldots,4c_r\}}$ is real completable to rank $2c_r - r$.
 
 The submatrix $B^{\{c_r+1,\ldots,2c_r+1\}}$ is uniquely completable to rank $c_r$ by \Cref{kcore}, and the first $c_r$ columns are generic.  Let $B'$ be the partial completion of $B$ with these entries filled.  Again by \Cref{grow}, ${B'}^{\{1,\ldots,2c_r\}}$ is real completable to rank $2c_r-r$ if and only if the Schur complement $C = {B'}^{\{1,\ldots,2c_r\}}/{B'}^{\{c_r+1,\ldots,2c_r\}}_{\{c_r+1,\ldots,2c_r\}}$ is real completable to rank $c_r-r$.  Submatrix $C$ has unspecified entry set $G(c_r,1)$, so it is real completable to rank $c_r - r$ by induction hypothesis.

 Therefore $A$ is generically real completable to rank $4c_r-r$, which is corank $r+1$.  See \Cref{figGn1} for an illustration of the proof process.
\end{proof}

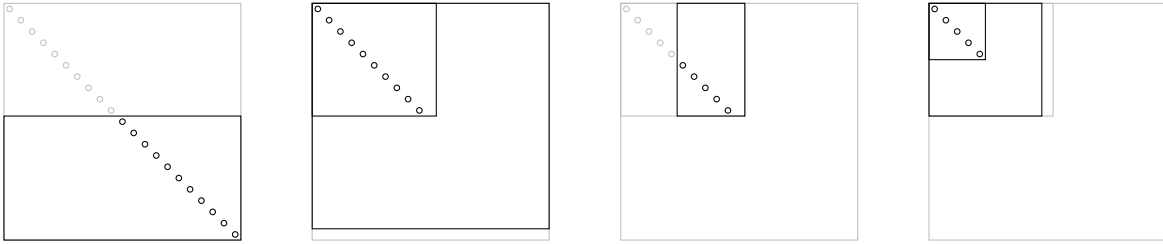
\begin{figure}
  \begin{minipage}{0.24\textwidth}
 \centering
 \begin{tikzpicture}[scale=.15]
\foreach \x in {0,...,9}
  \draw[lightgray] (\x,\x*-1) circle (7pt);
\foreach \x in {10,...,20}
  \draw (\x,\x*-1) circle (7pt);
%%% squares
\draw[lightgray] (-.5,.5) rectangle (20.5,-20.5);
\draw (-.5,-10+.5) rectangle (20.5,-20.5);
\end{tikzpicture}
\end{minipage}
\begin{minipage}{0.24\textwidth}
\centering
 \begin{tikzpicture}[scale=.15]
\foreach \x in {0,...,9}
  \draw (\x,\x*-1) circle (7pt);
%%% squares
\draw[lightgray] (-.5,.5) rectangle (20+.5,-20-.5);
\draw (-.5,.5) rectangle (20+.5,-20+.5);
\draw (-.5,.5) rectangle (10+.5,-10+.5);
\end{tikzpicture}
\end{minipage}
\begin{minipage}{0.24\textwidth}
\centering
 \begin{tikzpicture}[scale=.15]
\foreach \x in {0,...,4}
  \draw[lightgray] (\x,\x*-1) circle (7pt);
\foreach \x in {5,...,9}
  \draw (\x,\x*-1) circle (7pt);
%%% squares
\draw[lightgray] (-.5,.5) rectangle (20+.5,-20-.5);
\draw[lightgray] (-.5,.5) rectangle (10+.5,-10+.5);
\draw (5-.5,.5) rectangle (10+.5,-10+.5);
\end{tikzpicture}
\end{minipage}
\begin{minipage}{0.24\textwidth}
\centering
 \begin{tikzpicture}[scale=.15]
\foreach \x in {0,...,4}
  \draw (\x,\x*-1) circle (7pt);
%%% squares
\draw[lightgray] (-.5,.5) rectangle (20+.5,-20-.5);
\draw[lightgray] (-.5,.5) rectangle (10+.5,-10+.5);
\draw (-.5,.5) rectangle (10-.5,-10+.5);
\draw (-.5,.5) rectangle (5-.5,-5+.5);
\end{tikzpicture}
\end{minipage}
\caption{Pictured is $G(c_{r+1},1)^\comp$ for the case $r = 2$, $c_r = 5$ and $c_{r+1} = 21$.  The submatrix of the last $2c_r+1$ rows can be uniquely completed to rank $2c_r$.  Removing the last row and applying Schur complement, the problem can be reduced to completing the top-left $2c_r\times (2c_r+1)$ submatrix.  The last $c_r+1$ columns can be uniquely completed to rank $c_r$.  Removing the last column and applying Schur complement, the problem can be reduced to completing the top-left $c_r\times c_r$ submatrix.  This submatrix, $G(c_r,1)^\comp$, is completable to corank $r$ so the entire matrix is completable to corank $r+1$.}
\label{figGn1}
\end{figure}

 \Cref{1circulant} recovers \Cref{5diag} for the case $r=2$, by showing that $G(5,1)^\comp$ is generically real completable to corank 2.  The bound $c_2 = 5$ is tight since $G(4,1)^\comp$ is not generically real completable to corank 2.  For $r=3$, we get that $G(n,1)^\comp$ is generically real completable to corank 3 for $n \geq 21$, but this could be far from tight.  The smallest $n$ for which $\gcc(G(n,1)^\comp) = 3$ is 9.  The growth of $c_r$ is $O(4^r)$, while the growth of the smallest $n$ for which $\gcc(G(n,1)^\comp) = r$ is $O(r^2)$.
 
 \smallskip
 For $r > 1$, the completion process in the proof gives a unique completion of $G(c_r,1)^\comp$ to a variety cut out by minors that is strictly contained in the variety of matrices of rank at \mbox{most $r$}.  The generic fiber in the rank variety has positive dimension, for example in the $r=3$ case the dimension is 12.

 We generalize \Cref{1circulant} to the case of $G(n,k)^\comp$ for $k \geq 1$.  In order to do so, for $0 < k \leq n+1$ define 
 \[ \struc{G'(n,k)} = \{(i,j) \in [n]\times [n] \mid i \leq j < i+k\}. \]
 Note that $G'(n,k) \subseteq G(n,k)$.
 \Cref{figG'73} shows the case $n=7$ and $k=3$. 
 
The number of unspecified entries in $G'(n,k)$ is $nk - k(k-1)/2$ and this holds even for $n = k-1$. In the particular case of $n = k-1$ we have the following lemma.
\begin{lemma}
For any $k \geq 1$, $G'(k-1,k)^\comp$ has $\lceil k/2 \rceil$ as its only typical corank.
\end{lemma}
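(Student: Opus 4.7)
My plan is to prove the lemma by matching an upper bound (from a fully specified submatrix) and a lower bound (from an explicit real completion) on the typical corank. Set $n=k-1$, so $G'(k-1,k)^\comp$ describes the $n\times n$ partial matrix whose strict lower triangular entries are specified.

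For the upper bound on any attainable corank, I would identify a maximal fully specified square submatrix. The block indexed by rows $\{s+1,\ldots,n\}$ and columns $\{1,\ldots,n-s\}$ consists entirely of strict-lower-triangular entries precisely when $s+1>n-s$; choosing the smallest admissible $s$ gives a fully specified square block whose size matches the claimed rank bound. For generic data this block has full rank, so every completion of the full matrix has rank at least this size, which caps the corank from above.

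For the matching lower bound via a real completion, my plan is to use this fully specified corner block as a column basis. Generically the bottom-left corner block has full rank, so its columns (extended to the full height by assigning arbitrary real values to the unspecified entries above) already span an $r$-dimensional subspace, where $r$ is the target rank. Then for each column to the right of the corner, the specified entries impose a real linear system whose unknowns are the combination coefficients expressing that column as a linear combination of the basis columns. \Cref{kcore} produces a real completion of such a column within the rank-$r$ variety, uniquely when the number of its specified entries equals $r$ and with positive-dimensional real freedom otherwise. Iterating rightward completes the entire matrix to rank $r$ over $\mathbb{R}$, so the corresponding corank is achievable over $\mathbb{R}$ generically.

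To conclude uniqueness of the typical corank, I invoke \Cref{thm:trHierarchy}, which states that the typical coranks form a consecutive interval. The minimum of this interval equals the generic completion corank, which the upper bound identifies as the claimed value; the maximum is bounded by the achievable real corank, identified by the lower bound as the same value. Hence the interval collapses to a singleton. The main obstacle is the careful bookkeeping in the lower-bound construction: the order in which columns are completed matters, and the parity of $n$ likely requires separate treatment so that the corner block sits in the correct position and \Cref{kcore} applies with exactly the right number of specified entries at each step.
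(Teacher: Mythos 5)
Your strategy---a maximal fully specified square block to cap the corank, plus an explicit real completion using that block as a column basis to achieve it---is essentially the paper's own argument, which is a one-line appeal to the same two blocks. But there is a genuine gap exactly at the point where you assert that the smallest admissible $s$ ``gives a fully specified square block whose size matches the claimed rank bound'': it does not when $k$ is odd, and verifying this arithmetic is where all the content of the lemma lies. With $n=k-1$ the specified entries form the strict lower triangle, and your block has size $n-s=\lfloor n/2\rfloor=\lceil k/2\rceil-1$, so it forces every completion to have rank at least $\lceil k/2\rceil-1$, i.e.\ corank at most $n-(\lceil k/2\rceil-1)=\lfloor k/2\rfloor$. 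For odd $k$ this is strictly smaller than the claimed $\lceil k/2\rceil$. Concretely, for $k=3$ the matrix is $2\times2$ with a single, generically nonzero, specified entry, so corank $2$ (rank $0$) is impossible and the unique typical corank is $1=\lfloor 3/2\rfloor$. Your achievability step, carried out carefully, also lands on $\lfloor k/2\rfloor$: after filling the $\lfloor n/2\rfloor$ basis columns, each remaining column $j>\lfloor n/2\rfloor$ has $n-j\le\lfloor n/2\rfloor$ specified entries, so \Cref{kcore} applies and rank $\lceil k/2\rceil-1$ is generically real attainable. So your two bounds do meet, but at $\lfloor k/2\rfloor$, not $\lceil k/2\rceil$; the statement itself is off by one for odd $k$, the paper's proof shares the slip (a fully unspecified $a\times a$ block inside the weak upper triangle of a $(k-1)\times(k-1)$ matrix forces $a\le\lfloor k/2\rfloor$, so the cited $\lceil k/2\rceil\times\lceil k/2\rceil$ block does not exist for odd $k$), and your proposal inherits the error by declining to check the numbers.

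Two smaller points. First, by \Cref{thm:trHierarchy} it is the \emph{largest} typical corank that equals the generic completion corank, not the smallest; your interval argument still closes---a corank that bounds every completion from above and is generically attainable over $\mathbb{R}$ must be the unique typical corank---but the roles of the two ends of the interval are reversed from what you wrote. Second, in the lower-bound construction you should note why the span of the filled basis columns is generic relative to the constraints imposed by the later columns (their specified entries are disjoint, and the free entries of the basis columns may be chosen generically), since \Cref{kcore} requires genericity of the rank-$r$ matrix being extended.
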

\begin{proof}
The statement follows from the fact that the unspecified entries include a $\lceil k/2 \rceil \times \lceil k/2 \rceil$ block, and the specified entries include a $\lfloor k/2 \rfloor \times \lfloor k/2 \rfloor$ block.
\end{proof}

\begin{figure}
\begin{minipage}{.35\textwidth}
 \centering
\begin{tikzpicture}
\foreach \x in {0,...,6}
  \draw[fill=black] (\x*360/7:.8) circle (3pt);
\foreach \x in {0,...,6}
  \draw[fill=black] (\x*360/7:1.8) circle (3pt);
\foreach \x in {0,...,6}
  \draw (\x*360/7:.8) -- (\x*360/7:1.8);
\foreach \x in {0,...,6}
  \draw (\x*360/7:.8) -- (360/7+\x*360/7:1.8);
\foreach \x in {0,...,6}
  \draw (\x*360/7:.8) -- (-360/7+\x*360/7:1.8);
\end{tikzpicture}
\end{minipage}
\begin{minipage}{.35\textwidth}
\centering
\begin{tikzpicture}
\foreach \x in {0,...,6}
  \draw (\x*0.5,\x*-0.5) circle (3pt);
\foreach \x in {0,...,5}
  \draw (0.5+\x*0.5,\x*-0.5) circle (3pt);
\foreach \x in {0,...,4}
  \draw (1+\x*0.5,\x*-0.5) circle (3pt);
\draw (0,-2.5) circle (3pt);
\draw (0,-3) circle (3pt);
\draw (0.5,-3) circle (3pt);
%%% squares
\draw (-.25,.25) rectangle (3.25,-3.25);
\end{tikzpicture}
\end{minipage}
\caption{Bipartite graph $G(7,3)$ and its matrix entries.}
\label{figG73}
\end{figure}

\begin{figure}[H]
\begin{minipage}{.35\textwidth}
 \centering
\begin{tikzpicture}
\foreach \x in {0,...,6}
\draw[fill=black] (\x*.8,1.5) circle (3pt);
\foreach \x in {0,...,6}
\draw[fill=black] (\x*.8,0) circle (3pt);
\foreach \x in {0,...,6}
\draw (\x*.8,1.5) -- (\x*.8,0);
\foreach \x in {0,...,5}
\draw (\x*.8,0) -- (\x*.8+.8,1.5);
\foreach \x in {0,...,4}
\draw (\x*.8,0) -- (\x*.8+1.6,1.5);
\end{tikzpicture}
\end{minipage}
\begin{minipage}{.35\textwidth}
\centering
\begin{tikzpicture}
\foreach \x in {0,...,6}
  \draw (\x*0.5,\x*-0.5) circle (3pt);
\foreach \x in {0,...,5}
  \draw (0.5+\x*0.5,\x*-0.5) circle (3pt);
\foreach \x in {0,...,4}
  \draw (1+\x*0.5,\x*-0.5) circle (3pt);
%%% squares
\draw (-.25,.25) rectangle (3.25,-3.25);
\end{tikzpicture}
\end{minipage}
\caption{Bipartite graph $G'(7,3)$ and its matrix entries.}
\label{figG'73}
\end{figure}

 \begin{proposition}\label{kcirculant}
 For $k \geq 1$ and $m \geq 0$, $G'(n,k)^\comp$ is generically real completable to corank $\lceil k/2\rceil + mk$ for all $n \geq ((4k-3)4^m - k)/3$.
\end{proposition}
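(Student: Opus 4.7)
Set $d_m := ((4k-3)4^m - k)/3$ for brevity; note that $d_0 = k-1$ and $d_{m+1} = 4 d_m + k$. The plan is to proceed by induction on $m$, closely following the strategy of the proof of \Cref{1circulant}.

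For the base case $m = 0$, we must show $G'(n,k)^\comp$ is generically real completable to corank $\lceil k/2 \rceil$ for all $n \geq k-1$. The case $n = k-1$ is exactly the preceding lemma. For $n > k-1$, observe that outside the diagonal $k$-strip the entries of $G'(n,k)^\comp$ are fully specified, and the extra rows and columns beyond the $(k-1)\times(k-1)$ principal submatrix each carry at most $k$ unspecified entries sitting in a row or column that is otherwise specified. These extra rows and columns can therefore be absorbed one at a time by \Cref{kcore}, or equivalently by the Schur-complement reduction of \Cref{grow}, reducing the real-completability problem back to the $n = k-1$ case.

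For the inductive step, fix $n = d_{m+1}$ and let $A$ be a generic partial filling of $G'(n,k)^\comp$. The strategy mirrors \Cref{1circulant}. First, identify a bottom row block of $A$ (of roughly $2d_m + k$ rows) whose unspecified entries are organized into a pattern amenable to \Cref{codimc}, and uniquely complete these rows to a specific rank, yielding $A'$. Apply the Schur-complement reduction of \Cref{grow} to the enlarged block of fully specified rows and columns so produced, passing to a smaller partially filled matrix $B$. Next, repeat the uniqueness-completion step on a right-column block of $B$, whose unspecified entries again form a structured pattern. A second Schur-complement reduction then brings the problem to a $d_m \times d_m$ matrix $C$ with unspecified set a copy of $G'(d_m,k)$, which by the induction hypothesis is real completable to corank $\lceil k/2 \rceil + m k$. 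Since Schur-complement preserves corank, the two row/column unique-completion steps are responsible for the extra corank, and arranging them so that their combined contribution equals $k$ (one contributing $\lceil k/2 \rceil$ and the other $\lfloor k/2 \rfloor$) yields the total corank $k + \lceil k/2 \rceil + m k = \lceil k/2 \rceil + (m+1) k$, as required. \Cref{figGn1} (or its analogue) serves as the visual template for the reduction.

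The main obstacle is the row-block completion in the inductive step. In \Cref{1circulant} the bottom $2c_r + 1$ rows have the clean feature that each row contains exactly one unspecified entry in a distinct column, so that a single application of \Cref{codimc} with $r=1$ produces a unique completion. For $G'(n,k)$ with $k \geq 2$, the unspecified entries form a parallelogram strip of width $k$ rather than sitting on a single diagonal; the triangular tails at the corners of this strip produce a bottom row block whose columns have non-uniform degrees (ranging from $1$ to $k$), so \Cref{codimc} does not apply directly. The crux of the argument is therefore to carve out the correct row and column ranges so that the remaining unspecified entries fit the uniform-degree hypothesis of \Cref{codimc}, with the triangular tails either absorbed into the fully specified background or handled by a separate completion step. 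Matching the dimension count of the unspecified entries in the carved block against the codimension of the target rank variety — so that the two unique completions contribute exactly corank $\lceil k/2 \rceil$ and $\lfloor k/2 \rfloor$ — is what will force the precise values $2 d_m + k$ and $d_{m+1} = 4d_m + k$ appearing in the statement.
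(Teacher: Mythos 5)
You have the right skeleton --- the recursion $d_{m+1} = 4d_m + k$, the base case via the lemma on $G'(k-1,k)^\comp$, and the two-stage row-block/column-block reduction with Schur complements mirroring \Cref{1circulant} --- but you explicitly defer the one step that actually requires an idea. You correctly observe that the triangular tails of the width-$k$ strip make the degrees in the bottom row block non-uniform, so that \Cref{codimc} (or \Cref{kcore}) does not apply directly, and then you write that ``the crux of the argument is therefore to carve out the correct row and column ranges'' without carrying this out. The paper's resolution is a specific normalization: after a row and column permutation, $G'(4c_m+k,k)$ equals $G(4c_m+k,k)\setminus\{(i,j)\mid i>c_m \text{ and } j\le c_m\}$, i.e., the full circulant band with the defect concentrated in the lower-left $\{i>c_m\}\times\{j\le c_m\}$ rectangle. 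With the defect placed there, the last $2c_m+k$ rows, and after the first Schur complement the relevant $c_m+k$ columns, present patterns to which \Cref{kcore} applies, and the deficient entries land in regions that are either fully specified or discarded. Without some such device, your sketch does not constitute a proof of the inductive step.

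Second, your corank bookkeeping is off. You assert that the two unique-completion steps contribute $\lceil k/2\rceil$ and $\lfloor k/2\rfloor$ to the corank. That is not how the increment arises: each induction step completes the $(4c_m+k)\times(4c_m+k)$ matrix to rank $4c_m-r$, so the corank jumps from $r$ to $r+k$ purely from the size accounting ($k$ rows and then $k$ columns are rendered dependent and removed before the Schur complements); the $\lceil k/2\rceil$ enters exactly once, in the base case, from the $\lceil k/2\rceil\times\lceil k/2\rceil$ unspecified block of $G'(k-1,k)$. Your final arithmetic happens to come out right, but a proof built on finding a row step of corank $\lceil k/2\rceil$ and a column step of corank $\lfloor k/2\rfloor$ would be chasing a structure that is not there. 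A minor further point: your inductive step fixes $n=d_{m+1}$ exactly, whereas the statement requires all $n\ge d_{m+1}$; the reduction to $n=d_{m+1}$ should be done once, up front, by the containment $G'(d_m,k)\subseteq G'(n,k)$ together with \Cref{grow}, rather than only in the base case.
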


\begin{proof}
 We follow the proof strategy of \Cref{1circulant}
 Let $\struc{c_m} = ((4k-3)4^m - k)/3$.  As above, it suffices to prove the case of $n = c_m$.  Proceed by induction on $m$.  For $m = 0$, $c_0 = k-1$.  As shown above, $G'(k-1,k)$ can be generically real completed to rank $\lceil k/2\rceil$.  For $m \geq 0$, $c_{m+1} = 4c_m + k$.
 
 By a row and column permutation, $G'(4c_m+k,k)$ can be expressed as
  \[ S = G(4c_m+k,k) \setminus \{(i,j) \mid i > c_m \text{ and } j \leq c_m\}, \]
 (see \Cref{figG'nk}).

The induction hypothesis is that $G'(c_m,k)$ is generically real completable to corank $r = \lceil k/2\rceil + mk$.
Let $A$ be a generic partially filled $c_{m+1}\times c_{m+1}$ matrix with unspecified entries $S$.  It follows from \Cref{kcore}, that the submatrix of the last $2c_m+k$ rows is uniquely completable to rank $2c_m$.  Let $A'$ be the partial filling of $A$ with these entries filled.  Removing the last $k$ rows, ${A'}_{\{1,\ldots,4c_m\}}$ is a generic partial filling of an $4c_m\times (4c_m+k)$ matrix.  By \Cref{grow}, ${A'}_{\{1,\ldots,4c_m\}}$ is real completable to rank $4c_m - r$ if and only if $B = {A'}_{\{1,\ldots,4c_m\}}/{A'}^{\{2c_m+1,\ldots,4c_m+k\}}_{\{2c_m+1,\ldots,4c_m\}}$ is real completable to rank $2c_m - r$.
 
 The submatrix $B^{\{c_m+1,\ldots,2c_m+k\}}$ is uniquely completable to rank $c_m$ by \Cref{kcore}, and let $B'$ be the partial completion of $B$ with these entries filled.  Again by \Cref{grow}, ${B'}^{\{1,\ldots,2c_m\}}$ is real completable to rank $2c_m-r$ if and only if the Schur complement $C = {B'}^{\{1,\ldots,2c_m\}}/{B'}^{\{c_m+1,\ldots,2c_m\}}_{\{c_m+1,\ldots,2c_m\}}$ is real completable to rank $c_m-r$.  Submatrix $C$ has unspecified entry set $G'(c_m,k)$, so it is real completable to rank $c_m - r$.

 Therefore $A$ is generically real completable to rank $4c_m-r$, which is corank $r+k$.  See \Cref{figG'nk} for an illustration of the proof process.
\end{proof}

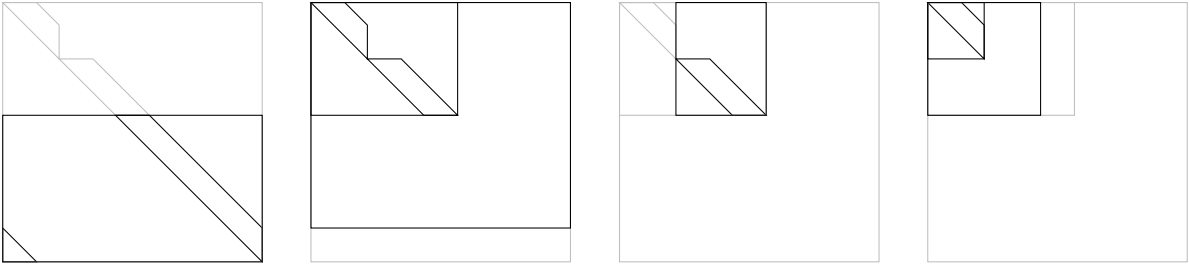
\begin{figure}
  \begin{minipage}{0.24\textwidth}
 \centering
 \begin{tikzpicture}[scale=.15]
\draw[lightgray] (0,0) -- (3,0) -- (5,-2) -- (5,-5) -- cycle;
\draw[lightgray] (5,-5) -- (8,-5) -- (13,-10) -- (10,-10) -- cycle;
\draw (10,-10) -- (13,-10) -- (23,-20) -- (23,-23) -- cycle;
\draw (0,-20) -- (3,-23) -- (0,-23) -- cycle;
%%% squares
\draw[lightgray] (0,0) rectangle (23,-23);
\draw (0,-10) rectangle (23,-23);
\end{tikzpicture}
\end{minipage}
\begin{minipage}{0.24\textwidth}
\centering
 \begin{tikzpicture}[scale=.15]
\draw (0,0) -- (3,0) -- (5,-2) -- (5,-5) -- cycle;
\draw (5,-5) -- (8,-5) -- (13,-10) -- (10,-10) -- cycle;
%%% squares
\draw[lightgray] (0,0) rectangle (23,-23);
\draw (0,0) rectangle (23,-20);
\draw (0,0) rectangle (13,-10);
\end{tikzpicture}
\end{minipage}
\begin{minipage}{0.24\textwidth}
\centering
 \begin{tikzpicture}[scale=.15]
\draw[lightgray] (0,0) -- (3,0) -- (5,-2) -- (5,-5) -- cycle;
\draw (5,-5) -- (8,-5) -- (13,-10) -- (10,-10) -- cycle;
%%% squares
\draw[lightgray] (0,0) rectangle (23,-23);
\draw[lightgray] (0,0) rectangle (13,-10);
\draw (5,0) rectangle (13,-10);
\end{tikzpicture}
\end{minipage}
\begin{minipage}{0.24\textwidth}
\centering
 \begin{tikzpicture}[scale=.15]
\draw (0,0) -- (3,0) -- (5,-2) -- (5,-5) -- cycle;
%%% squares
\draw[lightgray] (0,0) rectangle (23,-23);
\draw[lightgray] (0,0) rectangle (13,-10);
\draw (0,0) rectangle (10,-10);
\draw (0,0) rectangle (5,-5);
\end{tikzpicture}
\end{minipage}
\caption{The proof of \Cref{kcirculant} for $G'(n,k)$ follows that of \Cref{1circulant} for $G(n,1)$, except that $k$ rows, and then $k$ columns are eliminated in the process instead of 1.  The polygonal areas represent the unspecified parts of the matrices.}
\label{figG'nk}
\end{figure}

\begin{corollary}\label{triangle}
 For any $r \geq 0$ and $k \geq 1$, $G'(n,k)^\comp$ and $G(n,k)^\comp$ are generically real completable to corank $r$ for $n$ sufficiently large.
\end{corollary}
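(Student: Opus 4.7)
The plan is to deduce \Cref{triangle} directly from \Cref{kcirculant} via two simple observations. Observation (i): generic real completability to a larger corank implies generic real completability to every smaller corank, since a completion of rank at most $n-R$ is automatically a completion of rank at most $n-r$ whenever $r \le R$ (using the natural interpretation of ``completable to corank $r$'' as ``admits a real completion of rank at most $n-r$,'' consistent with the paper's usage, e.g.\ in the discussion following \Cref{1circulant}). Observation (ii): since $G'(n,k) \subseteq G(n,k)$, a real completion associated with the denser specified pattern $G'(n,k)^\comp$ restricts to one for the sparser pattern $G(n,k)^\comp$.

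First I would handle $G'(n,k)^\comp$. Given any target corank $r \geq 0$, pick the smallest integer $m \geq 0$ with $\lceil k/2\rceil + mk \geq r$, and set $n_0 = ((4k-3)4^m - k)/3$. By \Cref{kcirculant}, for every $n \geq n_0$ the set $G'(n,k)^\comp$ is generically real completable to corank $R = \lceil k/2\rceil + mk$, and observation (i) promotes this to generic real completability to corank $r$.

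Next I would transfer the result to $G(n,k)^\comp$. The inclusion $G'(n,k) \subseteq G(n,k)$ gives $G(n,k)^\comp \subseteq G'(n,k)^\comp$ as subsets of $[n]\times[n]$, and the extra coordinates $G(n,k)\setminus G'(n,k)$ provide a product decomposition $\B R^{G'(n,k)^\comp} \cong \B R^{G(n,k)^\comp} \times \B R^{G(n,k)\setminus G'(n,k)}$. For a generic $A \in \B R^{G(n,k)^\comp}$, a generic extension $\tilde A \in \B R^{G'(n,k)^\comp}$ is itself generic, hence by the $G'$-case admits a real completion of corank at least $r$; this completion agrees with $A$ on $G(n,k)^\comp$ and so is a completion of $A$ as well. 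The mildly delicate point in the transfer is precisely this genericity-preservation, which is handled by pushing forward the Zariski open dense set of good $\tilde A$'s through the linear projection $\B R^{G'(n,k)^\comp} \to \B R^{G(n,k)^\comp}$, obtaining a Zariski open dense set of good $A$'s. Beyond this routine point the proof is essentially a repackaging of \Cref{kcirculant}, with $n_0$ as the effective threshold for both $G'$ and $G$.
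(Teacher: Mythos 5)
Your proof is correct and is precisely the deduction the paper intends: \Cref{triangle} is stated there without an explicit proof, as an immediate consequence of \Cref{kcirculant}, and your two observations (completability to a larger corank implies completability to every smaller corank, and passing from $G'(n,k)^\comp$ to $G(n,k)^\comp$ by generically filling the extra entries of $G(n,k)\setminus G'(n,k)$ and projecting the dense set of good fillings) supply exactly the routine details being elided. Nothing further is needed.
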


In the search for examples with many typical coranks, \cite{BBS} suggests that perhaps for some fixed $k$, $G(n,k)$ might have the same minimum typical corank for all $n \gg 0$, even as the generic completion coranks grow.  However, the above corollary shows that this is not the case.  Still, the lower bound we get for the minimum typical rank grows slower in $n$ than the generic completion corank, so it does not rule out increasingly many typical coranks.

\smallskip

For the specific case of $n=6$ and $k=2$ we get the following typical corank.
 \begin{proposition}\label{G64e}
  $\tc((G(6,2))^\comp) = \tc((G'(6,2))^\comp) = \{3\}$.
 \end{proposition}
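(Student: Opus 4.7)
The plan is to show that the generic completion rank of both $G(6,2)^\comp$ and $G'(6,2)^\comp$ equals $3$, and that a generic real partial filling can be completed to rank $3$ over $\mathbb{R}$. By \Cref{thm:trHierarchy} typical ranks are consecutive starting from the generic completion rank, so once the smallest typical rank is shown to be $3$ and is itself realized on a Zariski dense set of $\mathbb{R}^S$, it must be the only typical rank, yielding $\tc = \{3\}$ in both cases.

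For the lower bound on the generic completion rank I would use a dimension count. In $6\times 6$ matrices, the variety of matrices of rank at most $2$ has dimension $2(6+6-2)=20$, while $|G(6,2)^\comp|=24$ and $|G'(6,2)^\comp|=25$. In both cases the number of specified entries strictly exceeds $20$, so the coordinate projection of the rank-$2$ variety onto $\mathbb{C}^S$ cannot be dominant. Hence the generic completion rank is at least $3$, equivalently $\gcc(U^\comp) \leq 3$, for $U\in\{G(6,2),G'(6,2)\}$.

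For the matching upper bound (over both $\mathbb{C}$ and $\mathbb{R}$), I would invoke \Cref{kcirculant} with $k=2$ and $m=1$. The threshold evaluates to $c_1=((4\cdot 2-3)\cdot 4-2)/3=6$ and the corank achieved is $\lceil 2/2\rceil + 1\cdot 2 = 3$, so $G'(6,2)^\comp$ is generically real completable to corank $3$. To transfer this to $G(6,2)^\comp$, note that $G'(6,2)=G(6,2)\setminus\{(6,1)\}$, so $G'(6,2)^\comp$ has exactly one extra specified entry compared to $G(6,2)^\comp$. Given a generic real partial filling of $G(6,2)^\comp$, choosing a generic real value for the entry $(6,1)$ produces a generic real partial filling of $G'(6,2)^\comp$, whose real rank-$3$ completion (guaranteed by the previous step) restricts to a real rank-$3$ completion of the original filling of $G(6,2)^\comp$. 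Since the projection $\mathbb{R}^{G'(6,2)^\comp}\to\mathbb{R}^{G(6,2)^\comp}$ forgetting $(6,1)$ is an open map, the image of the dense open set of fillings completable to corank $3$ is again Zariski dense, so $G(6,2)^\comp$ is also generically real completable to corank $3$.

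Combining the two steps, the generic completion corank equals $3$ in both cases, and it is achieved by real completions on a Zariski dense subset of $\mathbb{R}^S$. Hence the minimum rank of a real completion equals $3$ on a dense open set, and therefore $\tc(G(6,2)^\comp) = \tc(G'(6,2)^\comp) = \{3\}$. No step is really a bottleneck here: \Cref{kcirculant} performs the essential construction, and the only genuinely new observation is the straightforward reduction from $G'(6,2)^\comp$ to $G(6,2)^\comp$ via the single specified entry $(6,1)$.
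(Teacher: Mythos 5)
Your proposal is correct and follows essentially the same route as the paper: invoke \Cref{kcirculant} (with $k=2$, $m=1$) to get generic real completability of $G'(6,2)^\comp$ to corank $3$, transfer this to $G(6,2)^\comp$ since the latter has strictly fewer specified entries, and rule out corank $4$ by the dimension count ($|G'(6,2)|=11$ and $|G(6,2)|=12$ are both less than $4^2$, equivalently your count $|S|>\dim$ of the rank-$2$ variety). You merely spell out the transfer step and the dimension count in more detail than the paper does.
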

 \begin{proof}
  By \Cref{kcirculant}, $G'(6,2)^\comp$ is generically real completable to corank 3, and then so is $G(6,2)^\comp$.  Since $|G'(6,2)| = 11 < 4^2$, $G'(6,2)^\comp$ cannot be generically completed to corank 4.  Similarly for $G(6,2)^\comp$.
 \end{proof}

More generally, $G(n,2)^\comp$ does not have typical corank 2 for any $n \geq 6$.  This gives a negative answer to Question 5.5 of \cite{BBS}, which asks if 2 is a typical corank of $G(8,2)^\comp$.  It is easy to check that $\tc(G(4,2)^\comp) = \{2\}$.  In between, $G(5,2)^\comp$ has generic completion corank 3, but whether it can be generically completed to corank 3 over the reals remains open.
 \begin{question}
  What is $\tc(G(5,2)^\comp)$?
 \end{question}

A first step towards answering this question may be to take a different point of view. This problem translates nicely to the subsequent question rooted in Schubert calculus: Given 5 planes in $\P^2$ in general position, is there a plane that intersects all of them? 
If the answer to that question would be affirmative, then it is possible to complete $G(5,2)^\comp$ generically to rank $2$ over the reals, i.e., $\tc(G(5,2)^\comp)=\{3\}$.

\medskip
Following the result that $\tc(G(4,1)^\comp) = \{1,2\}$, a question is whether there are examples with typical coranks $\{n,n+1\}$ for any $n \geq 1$, or typical ranks $\{n,n+1\}$ for any $n\geq 2$.  The answer to both is yes.  The first answer follows immediately from \Cref{grow}.  By keeping the unspecified entries as $G(4,1)$ and growing the size of the matrix, the typical coranks remain $\{1,2\}$.

A family of examples answering the second question is also easy to construct.  For $n \geq 1$, let the unspecified entries of an $(n+3) \times (n+3)$ matrix be
  \[ U =  (\{3,\ldots,n+3\}\times \{3,\ldots,n+3\} \setminus \{3,4\} \times \{3,4\}) \cup G(4,1). \]
If a generic partial filling has the top left $4 \times 4$ submatrix real completable to rank 2, then the remaining rows can be uniquely completed to be in the row span, and similarly for the remaining columns.  On the other hand, if the $4\times 4$ submatrix can only be real completed to rank 3, then the matrix cannot be completed to rank 2, so the 3 is also a typical rank.  See \Cref{fig2typranks} for an example of each with $7\times 7$ matrices.

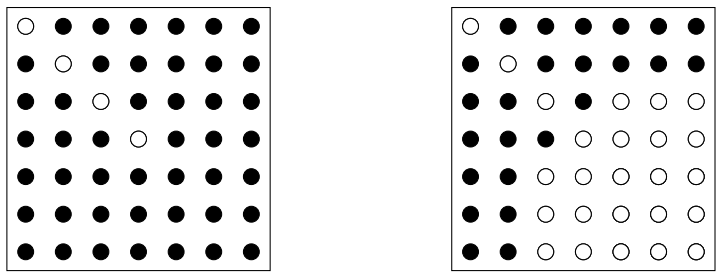
\begin{figure}
\begin{minipage}{.35\textwidth}
 \centering
\begin{tikzpicture}
\foreach \x in {0,...,6}
  \foreach \y in {0,...,6}
    \draw[fill=black] (\x*0.5,\y*-0.5) circle (3pt);
\foreach \x in {0,...,3}
  \draw[fill=white] (\x*0.5,\x*-0.5) circle (3pt);
%%% squares
\draw (-.25,.25) rectangle (3.25,-3.25);
\end{tikzpicture}
\end{minipage}
\begin{minipage}{.35\textwidth}
\centering
\begin{tikzpicture}
\foreach \x in {0,...,6}
  \foreach \y in {0,...,6}
    \draw[fill=black] (\x*0.5,\y*-0.5) circle (3pt);
\foreach \x in {2,...,6}
  \foreach \y in {4,...,6}
    \draw[fill=white] (\x*0.5,\y*-0.5) circle (3pt);
\foreach \x in {4,...,6}
  \foreach \y in {2,...,6}
    \draw[fill=white] (\x*0.5,\y*-0.5) circle (3pt);
\foreach \x in {0,...,3}
  \draw[fill=white] (\x*0.5,\x*-0.5) circle (3pt);
%%% squares
\draw (-.25,.25) rectangle (3.25,-3.25);
\end{tikzpicture}
\end{minipage}
\caption{The left matrix has typical coranks $\{1,2\}$.  The right matrix has typical ranks $\{2,3\}$. These typical coranks and ranks respectively remain constant for the generalizations to any size matrices.  Black and white dots represent specified and unspecified entries respectively.}
\label{fig2typranks}
\end{figure}

 \smallskip
We prove however that $G'(5,2)^\comp$ exhibits both 2 and 3 as typical coranks. 
 \begin{proposition}\label{G'52}
  $\tc(G'(5,2)^\comp) = \{2,3\}$.
 \end{proposition}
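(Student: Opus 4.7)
The plan is to combine the structure theorem for typical ranks with an explicit subvariety construction and one explicit obstruction. By Theorem~\ref{thm:trHierarchy}, typical coranks form a consecutive range whose maximum equals $\gcc$, so it suffices to establish three things: (a) $\gcc(G'(5,2)^\comp) = 3$, (b) $G'(5,2)^\comp$ is generically real completable to corank~$2$, and (c) there is a Euclidean-open subset of $\mathbb{R}^{G'(5,2)^\comp}$ admitting no real rank-$2$ completion. Then (a) places $3$ in $\tc$, and (b) combined with (c) yields that on a Zariski-dense (in fact Euclidean-open) set the minimum real completion corank is exactly $2$, so that $2 \in \tc$; consecutiveness and the upper bound from (a) force $\tc(G'(5,2)^\comp) = \{2,3\}$.

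For (a), the upper bound $\gcc \leq \sqrt{|G'(5,2)|} = 3$ is immediate from the dimension count. The lower bound follows from $G'(5,2) = G(5,2) \setminus \{(5,1)\}$, the value $\gcr(G(5,2)^\comp) = 2$ noted in Section~\ref{Sec:more circulant graph results}, and the observation that the coordinate function $M_{5,1}$ is not algebraic over $G(5,2)^\comp$ on the rank-$2$ variety $V_2$, so extending the already-dominant projection $V_2 \to \mathbb{C}^{G(5,2)^\comp}$ by the extra coordinate $(5,1)$ remains dominant. For (b), I follow the template of Proposition~\ref{5diag} together with Proposition~\ref{uniquetoreal}. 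Writing a generic rank-$3$ matrix as $M = \binom{I_3}{B'} R$ with $B' \in \mathbb{C}^{2\times 3}$ and $R \in \mathbb{C}^{3\times 5}$ (so the top three rows are $R$ and the bottom two rows are $B'R$), I cut $V_3$ by real polynomial equations fixing the first row of $B'$ to a generic rational triple and fixing two entries such as $M_{2,2}$ and $M_{3,3}$ to generic rational constants. These equations carve out a $16$-dimensional subvariety $W \subseteq V_3$, and the specified-entry constraints in rows $4$ and $5$ then become a block-triangular linear system over $\mathbb{R}$ (row $4$ determines three entries of $R$; row $5$ determines $B'_2$ together with the remaining entry of $R$). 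For generic real $A$ this system has a unique real solution, so $G'(5,2)^\comp$ is generically uniquely completable to $W$, and Proposition~\ref{uniquetoreal} yields generic real completability to $V_3$, i.e., to corank~$2$.

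The main obstacle is (c): producing an explicit real filling $A_0 \in \mathbb{Q}^{G'(5,2)^\comp}$ all of whose complex rank-$2$ completions are non-real. My plan is to compute the zero-dimensional ideal describing rank-$2$ completions of a carefully chosen small integer-valued $A_0$ in a computer algebra system and verify, for instance by factoring a univariate resolvent, that every complex completion has nonzero imaginary part. Because the projection from $V_2$ is generically unramified, simple non-real roots of this system persist under small real perturbations, so ``every rank-$2$ completion of $A$ is non-real'' is a Euclidean-open condition on $A$; a nonempty Euclidean-open subset of $\mathbb{R}^{G'(5,2)^\comp}$ is automatically Zariski dense, and intersecting with the Zariski-dense set from (b) gives the sought open set on which the minimum real completion corank equals $2$. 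The difficulty is essentially in the certificate of non-reality: the degree of $\pi_S\colon V_2 \to \mathbb{C}^{G'(5,2)^\comp}$ exceeds the four completions governing the $G(4,1)^\comp$ example of \cite[Example~3.1]{BBS}, so either an explicit computer-algebra verification on a judicious $A_0$ or a conceptual algebraic obstruction (for instance a sum-of-squares/positivity certificate for a suitable discriminant) is required.
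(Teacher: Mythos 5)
Your logical skeleton is sound, but the proposal has a genuine gap exactly where the real content of the proposition lies: step (c). Announcing that you will ``compute the zero-dimensional ideal for a carefully chosen $A_0$ and verify that every complex completion has nonzero imaginary part'' is a plan for a proof, not a proof; no $A_0$ is exhibited and no certificate of non-reality is given, so $2 \in \tc(G'(5,2)^\comp)$ is not established. As you yourself note, the fiber of $\pi_S$ over the rank-$2$ variety here is larger than the four completions governing the $G(4,1)^\comp$ example, so this is not a routine check. There are smaller unverified assertions elsewhere: in (a), the algebraic independence of the coordinate $(5,1)$ from $G(5,2)^\comp$ on the rank-$2$ variety is claimed without argument, and in (b) the block-triangularity of your linear system on the subvariety $W$ is not checked. (For (b) you could bypass your construction entirely by invoking \Cref{thm:Characterization1isTypicalCR}: $G'(5,2)$ is none of the three listed types, so $1$ is not a typical corank.)

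The paper closes the critical gap --- and gets (a) for free --- by a reduction you did not find. For any rank-$2$ completion, the $3\times 3$ minor on rows $\{3,4,5\}$ and columns $\{1,2,3\}$ involves only the single unknown $(3,3)$ and therefore determines it rationally. After this elimination, the $4\times 4$ submatrix on rows $\{1,2,3,5\}$ and columns $\{1,3,4,5\}$ is a \emph{generic} partial matrix whose unspecified entries form its diagonal, i.e.\ the $G(4,1)$ pattern, and rank-$2$ completability of $A$ is equivalent to rank-$2$ completability of this submatrix. Example~3.1 of \cite{BBS} then supplies both open sets at once: one on which real rank-$2$ completions exist and one on which they do not. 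If you wish to salvage your route you must actually produce the explicit filling and the non-reality certificate; the reduction to $G(4,1)$ is shorter and computation-free.
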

 \begin{proof}
  Let $A$ be the generic partial filled $5\times 5$ matrix with unspecified entries $G'(5,2)$. To complete $A$ to rank 2, all $3\times 3$ minors must vanish.  Since $A_{\{3,4,5\}}^{\{1,2,3\}}$ has only one unspecified entry, the minor condition uniquely specifies entry $(3,3)$.  Let $A'$ be the partial filling with $(3,3)$ specified. The $4\times 4$ submatrix $B = {A'}_{\{1,2,3,5\}}^{\{1,3,4,5\}}$ has four unspecified entries along the diagonal.  Because the choice of entry $(3,3)$ depends on values that do not appear in $B$, $B$ is generic.  By Example 3.1 of \cite{BBS}, $B$ has typical ranks 2 and 3.
  
  Suppose that $B$ is completed to rank 2.  Then the remaining two unspecified entries of ${A'}_{\{1,2,3,5\}}$ can be chosen so that is has rank 2, and similarly for ${A'}^{\{1,3,4,5\}}$.  Since $\det {A'}_{\{3,4,5\}}^{\{1,2,3\}}$ vanishes, the resulting completion has rank 2.  If instead $B$ is completed to rank 3, $A$ can only be completed to rank 3.
 \end{proof}
Using the same argument, \Cref{G'52} can be generalized to show that for $\struc{H} = G(n+3,n) \setminus E$, where $\struc{E}$ is an appropriately chosen set of $n-1$ entries, $\tc(H^\comp) = \{n,n+1\}$ for all $n \geq 3$.  This gives another family of examples with increasing pairs of typical coranks.

\section{Discussion}\label{Sec:Discussion}

 Every example so far of an entry set with more than one typical corank has the property that for a generic real partial filling, the number of complex completions to some typical corank $r$ is finite, or at least one of the entries has only a finite number of values in the set of completions.  We have very few examples to draw a pattern from, but it still raises the question of whether this always holds.
 
 \begin{question}
  Is there an unspecified entry set $U \subseteq [n]\times [n]$ with the property that for a generic real partial filling, every entry in $U$ has an infinite number of possible complex values in the set of completions to corank $r$, but there are no real completions?
 \end{question}
 
 If the answer is no, then three or more typical coranks is impossible.

\subsection*{Acknowledgments}
The beginnings of this project were facilitated by the NSF-supported (DMS-1439786) Institute for Computational and Experimental Research in Mathematics (ICERM), where both authors were in residence during the semester program on Nonlinear Algebra in the Fall of 2018.  We thank Greg Blekherman and Daniel Bernstein for introducing us to the topic and for their contributions to early stages of the project, and Greg's consultation on the paper.  We also acknowledge the help of Rainer Sinn.

\bibliographystyle{amsalpha}
\bibliography{bib}

\end{document}